\newtheorem{theorem}{Theorem}[section]
\newtheorem{lemma}[theorem]{Lemma}
\newtheorem{corollary}[theorem]{Corollary}
\theoremstyle{definition}
\newtheorem{definition}[theorem]{Definition}
\newtheorem{example}[theorem]{Example}
\theoremstyle{remark}
\newtheorem{remark}[theorem]{Remark}
\numberwithin{equation}{section}
\def\V{\Vert}
\keywords{Fixed points, actions of semigroups, metric spaces, uniform Lipschitzian mappings, Lifschitz constant, uniform normal structure, orbit-nonexpansive mappings, orbit Lipschitzian actions.}
\subjclass[2010]{47H09, 47H10, 47H20, 54E40}
\begin{document}

\title[Orbital Lipschitzian actions]{Orbital Lipschitzian mappings and semigroup actions on metric spaces}
\author{Rafael Esp\'{\i}nola,  Maria  Jap\'on, Daniel Souza}

\dedicatory{Dedicated to the memory of Professors K. Goebel and W. A. Kirk for all the moments, teachings and love they gave us.}

\address{R. Esp\'{\i}nola\hfill\break
Departamento de An\'alisis Matemático, Universidad de
Sevilla, Spain}
 \email{espinola@us.es}

 \address{M. Jap\'on \hfill\break
Departamento de An\'alisis Matemático,  Universidad de
Sevilla, Spain}
 \email{japon@us.es}

 \address{D. Souza \hfill\break
Departamento de An\'alisis Matemático,  Universidad de
Sevilla, Spain}
 \email{dsouzaufrj@gmail.com}
 
\thanks{The authors were partially supported by Ministerio de Ciencia e Innovaci\'on, Grant PGC2018-098474-B-C21, and Junta de Andaluc\'{\i}a Grants P20-00637, US-1380969 and FQM-127.}

\begin{abstract}
In this paper we study some results on common fixed points of families of mappings on metric spaces by imposing orbit  Lipschitzian conditions on them. These orbit Lipschitzian conditions are weaker than asking the mappings to be Lipschitzian in the traditional way. We provide new results under the two classic approaches in the theory of fixed points for uniformly Lipschitzian mappings: the one under the normal structure property of the space (which can be regarded as the Cassini-Maluta's approach) and the one after the Lifschitz characteristic of the metric space (Lifschitz's approach). Although we focus on the case of semigroup of mappings, our results are new even when a mapping is considered by itself.
\end{abstract}

\maketitle

\section{Introduction}

Given a metric space $(X,d)$, a mapping $T:X\to X$ is said to be uniformly $k$-Lipschitzian if there exists some $k>0$ such that
$$
d(T^n x,T^ny)\le k d(x,y)
$$
for all $x,y\in X$ and for all $n\in\mathbb{N}$.
Uniformly Lipschitzian mappings were introduced by K. Goebel and W. A. Kirk in 1973 in their seminal work \cite{GK1} (see also Chapter 16 in \cite{GK}). As a result, they proposed a generalization of nonexpansive mappings (which is the case when $k=1$ in the above definition) and for which fixed point results were still possible under adequate geometrical conditions on the space. Since then, many authors have worked on this class of mappings providing a very fruitful line of research for decades (see, for instance, \cite{CM, DKS, DuTa, Go, HH, Ishi,  L, LX, TaXu, W, WW} and references therein).

Inspired by the notion of orbit-nonexpansivity  \cite{AFH, EJS,N}, in this paper we introduce some  orbit uniformly Lipschitzian conditions which we use to  revisit the two main approaches for obtaining fixed points for uniformly Lipschitzian mappings: Lifschitz's and Casini-Maluta's approaches. In our setting, the Lipschitz condition is not evaluated for every pair of points $x$ and $y$ in $X$ in terms of their distance, but we consider orbital assumptions instead (see Section 3 for definitions), which result in weaker requirements for  the mappings.  The operators here considered are very far from being continuous as it is shown with several examples along the work. Additionally, we will consider  semigroups acting over a metric space  with the purpose of finding a common fixed point for all the operators involved. Lifschitz and Casini-Maluta Theorems, and many o\-thers in the literature, will be now obtained as particular cases of the results.

Our work is organized as follows: in the Preliminaries section, we provide some details on the history of fixed points for uniformly Lipschitzian mappings as well as we introduce the notation and definitions that will be used along the paper. Additionally, Lifschitz and Casini-Maluta approaches  will be fully stated. In Section $3$ we introduce actions by semigroups over a metric space considering the case of a single mapping as a particular example. We define orbit Lipschitzian actions as the natural extension of orbit nonexpansivity for uniformly Lipschitzian mappings. Also, a strong version of this notion is introduced that will be of very much importance for us. Nevertheless, we show that both notions coincide in a wide class of semigroups. In our two main sections, Sections $4$ and $5$, we focus on extending Lifschitz and Casini-Maluta Theorems, respectively, in metric spaces to provide common fixed points for semigroups of mappings satisfying our orbit Lipschitzian conditions. 
We also display examples that illustrate the applicability of our Lipschitz condition on the orbits as well as the optimality of our results. 



\section{Preliminaries}

We start this section by recalling some well-known definitions and  fixed point theorems for uniformly Lipschitzian mappings. Throughout all the article, by $X$ and $B(x,r)$ we will denote an arbitrary metric space and the closed ball centered at $x\in X$ with radius $r\in (0,+\infty)$, respectively. 


The starting point for this work are uniformly Lipschitzian mappings, which have already been introduced in the previous section. In the seminal paper \cite{GK1}, K. Goebel and W. A. Kirk proved a fixed point theorem for uniformly $k$-Lipschitzian mappings in the framework of uniformly convex Banach spaces for which $k(1-\delta(1/k))<1$, where $\delta(\cdot)$ denotes the modulus of uniform convexity of $X$. For the particular case of a Hilbert space, Goebel-Kirk result provides fixed point for uniformly $k$-Lipschitzian mappings with $k<5^{1/2}/2$ (see \cite[Chapter 16]{GK}). A sharper estimation for $k$, still providing the existence of fixed points for uniformly $k$-Lipschitzian mappings in Hilbert spaces,  was obtained by Lifschitz in \cite{L}. By using the geometric characteristic $\kappa(X)$ introduced in the same work, Lifschitz proved as a consequence of its main result that every uniformly $k$-Lipschitzian self-mapping defined on closed convex bounded subset of a Hilbert space has a fixed point whenever $k<2^{1/2}$. 
In general,  the best constant $k$  in Hilbert spaces for which  the existence of fixed points for  uniformly $k$-Lipschitzian mappings can be assured is unknown, although this constant has to be less that $\pi/2$ by Baillon's example \cite[Example 16.1]{GK}). 

Lifschitz Theorem was proved in the broader context of general metric spaces by using the notion of regular balls that we recall next.

\begin{definition}\label{c}
Balls in a metric space $X$ are said to be $c$-regular, for $c\ge 1$, if the following holds: For any $k<c$ there are numbers $\mu,\alpha\in (0,1)$ such that for any $x,y\in X$ and $r>0$ with $d(x,y)\ge (1-\mu)r$, there exists a $z\in X$ such that
$$
B(x, (1+\mu)r)\cap B(y, k(1+\mu)r)\subset B(z,\alpha r).
$$
The Lifschitz characteristic of $X$ is finally defined as 
$$
\kappa(X)=\sup\{c\ge 1: \mbox{ the balls in $X$ are $c$-regular}\}.
$$

\end{definition} 
\begin{remark}
	It happens that $\kappa (X)\in [1,2]$ for any metric space $X$. In particular, extreme values are reached as $\kappa (X)=1$ for $X=({\mathbb R}^2,\|\cdot\|_{\rm max})$ and $\kappa (X)=2$ for $X$ the real line with the usual metric. For a Hilbert space $X$, $\kappa (X)=2^{1/2}$.  In the framework of Banach spaces, it holds that $\kappa(X)>1$ if and only if $\epsilon_0(X)$, the characteristic of convexity of $X$, is smaller than $1$ (see, for instance, \cite[Theorem 5]{DT}). For more on Lifschitz constant, check \cite{DKS, DT} or \cite[Chapter 16]{GK}. In particular, 
	if $X$ is a complete  CAT$(0)$ space, then $\kappa(X)\ge \sqrt{2}$ with $\kappa(X)=2$  if $X$ is  an $\mathbb{R}$-tree  (see \cite[Theorem 5]{DKS}). Remember that $\mathbb{R}$-trees are a particular instance of CAT$(0)$ spaces.
\end{remark}

The following result is the main one in \cite{L} and it is the one regarded as Lifschitz Theorem (see also \cite[Theorem 16.2]{GK}).
\begin{theorem} \label{Li}
Let $X$ be a bounded complete metric space and $T:X\to X$ a  uniformly $k$-Lipschitzian mapping. If $k<\kappa(X)$, then $T$ has a fixed point.
\end{theorem}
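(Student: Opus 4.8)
The plan is to reproduce the iterative scheme behind Lifschitz's original argument \cite{L} (see also \cite[Ch.~16]{GK}): from an arbitrary point one manufactures a Cauchy sequence whose limit is forced to be a fixed point, the contraction being extracted from the $c$-regularity of balls together with the \emph{strict} inequality $k<\kappa(X)$. First, taking $n=1$ in the defining inequality shows $T$ is $k$-Lipschitz, hence continuous; and if $k<1$ then $T$ is a Banach contraction on the complete space $X$ and has a fixed point, so we may assume $1\le k<\kappa(X)$. Since $\kappa(X)=\sup\{c\ge1:\text{balls in }X\text{ are }c\text{-regular}\}>k$, fix $c>k$ for which balls in $X$ are $c$-regular, and apply Definition \ref{c} to the pair $k<c$ to obtain $\mu,\alpha\in(0,1)$. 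For $x\in X$ put $r(x)=\limsup_{n\to\infty}d(x,T^nx)$, which is finite because $X$ is bounded. It suffices to exhibit $x^*\in X$ with $r(x^*)=0$: then $T^nx^*\to x^*$, and since $d(T^nx^*,Tx^*)\le k\,d(T^{n-1}x^*,x^*)$, we get $d(x^*,Tx^*)\le d(x^*,T^{n+1}x^*)+k\,d(T^nx^*,x^*)\to0$, i.e.\ $Tx^*=x^*$.

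The engine of the proof is the following contraction property of the orbital radius: \emph{there exist $\theta=\theta(k,c)\in(0,1)$ and $M=M(k,c)>0$ such that every $x\in X$ with $r(x)>0$ admits some $x'\in X$ with $r(x')\le\theta\,r(x)$ and $d(x,x')\le M\,r(x)$.} To produce such an $x'$, write $r=r(x)$; choose $N$ with $d(x,T^nx)\le(1+\mu)r$ for $n\ge N$, and, since $\limsup_n d(x,T^nx)=r$, choose $m\ge N$ with $d(x,T^mx)\ge(1-\mu)r$; set $y=T^mx$. For all large $j$ one has $T^jx\in B\bigl(x,(1+\mu)r\bigr)$, and by the uniform Lipschitz condition $d(y,T^jx)=d\bigl(T^mx,T^m(T^{j-m}x)\bigr)\le k\,d(x,T^{j-m}x)\le k(1+\mu)r$, so $T^jx\in B\bigl(y,k(1+\mu)r\bigr)$. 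As $d(x,y)\ge(1-\mu)r$, Definition \ref{c} yields $z\in X$ with $T^jx\in B(z,\alpha r)$ for all large $j$; since $x$ itself belongs to $B\bigl(x,(1+\mu)r\bigr)\cap B\bigl(y,k(1+\mu)r\bigr)$ (using $k\ge1$), also $d(x,z)\le\alpha r$. This single step controls only the orbit of $x$; the delicate point is to convert it into genuine control of the orbital radius of a new base point, because a bare triangle-inequality estimate gives merely $r(z)\le(1+k)\alpha r$, which need not beat $r$. Following Lifschitz, I would iterate the center-selection: re-apply Definition \ref{c} along the orbit a number of times controlled by $k$ and $c$, exploiting that $k$ lies \emph{strictly} below the regularity threshold $c$ so that the parasitic factors accumulated along the way stay bounded while the radius genuinely contracts; after finitely many such steps one reaches a point $x'$ with $r(x')\le\theta\,r(x)$, and $d(x,x')\le M\,r(x)$ follows by summing the geometrically decreasing displacements of the successive centers.

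Granting this, fix $x_0\in X$ and apply the contraction property repeatedly --- stopping at once if some $r(x_n)=0$, in which case $x_n$ is already a fixed point --- to obtain $x_0,x_1,x_2,\dots$ with $r(x_{n+1})\le\theta\,r(x_n)$ and $d(x_n,x_{n+1})\le M\,r(x_n)$. Then $r(x_n)\le\theta^n r(x_0)\to0$ and $\sum_n d(x_n,x_{n+1})\le M\,r(x_0)\sum_n\theta^n<\infty$, so $(x_n)$ is Cauchy and, $X$ being complete, $x_n\to x^*\in X$. For all $n$ and $j$,
$$
d(x^*,T^nx^*)\le d(x^*,x_j)+d(x_j,T^nx_j)+d(T^nx_j,T^nx^*)\le (1+k)\,d(x^*,x_j)+d(x_j,T^nx_j),
$$
so letting $n\to\infty$ gives $r(x^*)\le(1+k)\,d(x^*,x_j)+r(x_j)$, and then $j\to\infty$ gives $r(x^*)=0$; by the reduction in the first paragraph, $x^*$ is a fixed point of $T$.

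The main obstacle is precisely the upgrade inside the contraction property: converting the single ball-containment for the orbit of $x$ into a true contraction of the orbital radius without conceding a factor $\ge1$. This is where the strict inequality $k<\kappa(X)$ is indispensable --- any cruder estimate loses a factor such as $1+k$ or $2$ that this hypothesis alone cannot absorb --- and where Lifschitz's careful iterated ball-chasing (the real substance of the theorem) carries the argument; an alternative would be to encapsulate that work as the existence of suitable ``regular sequences'' along the orbit.
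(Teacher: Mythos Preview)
Your outline follows the right global scheme---contract an orbital radius, build a Cauchy sequence, pass to the limit---but the quantity you call $r(x)$ is the wrong one, and this is precisely the ``obstacle'' you flag and then do not resolve. You put $r(x)=\limsup_{n}d(x,T^nx)$, tied solely to the orbit of $x$. After one application of Definition~\ref{c} you correctly obtain $z$ with $T^jx\in B(z,\alpha r)$ for all large $j$; but to bound $r(z)$ you must now control the orbit of $z$ itself, and the triangle inequality only gives $r(z)\le(1+k)\alpha\,r$. The ``iterated center-selection'' you invoke does not repair this: each further application of Definition~\ref{c} is launched from the \emph{new} base point and reproduces the same parasitic factor, so after $n$ rounds one has at best $r(z_n)\le\bigl((1+k)\alpha\bigr)^n r$, which is useless unless $(1+k)\alpha<1$---and Definition~\ref{c} supplies $\alpha\in(0,1)$ with no further quantitative control. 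There is no such inner iteration in Lifschitz's actual argument; the only iteration is the outer one producing the sequence $(x_n)$.

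The correct radius---the one used in \cite{L}, in \cite[Theorem~16.2]{GK}, and in the proof of Theorem~\ref{main} of this paper---is
\[
r(x):=\inf_{y\in X}D\bigl(x,o(y)\bigr)=\inf_{y\in X}\sup_{n\ge0}d(x,T^ny),
\]
an infimum over \emph{all} orbits, not just that of $x$. With this definition the single step you performed already places an entire orbit (a tail $o(T^{m'}x)$, or an auxiliary $o(T^{m'}y)$ when $D(x,o(x))\ge(1+\mu)r$) inside $B(z,\alpha r)$, whence $r(z)\le\alpha r$ in one stroke, with no $(1+k)$ and no inner iteration. The paper's proof of Theorem~\ref{main} carries this out by splitting into the two cases $D(x,o(x))<(1+\mu)r$ and $D(x,o(x))\ge(1+\mu)r$; everything downstream (Cauchy sequence, limit $p$, and $r(p)=0\Rightarrow Tp=p$) then proceeds essentially as you wrote.
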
 

For a nonempty subset $C$ of  a metric space $X$, we recall the following notation:  
\begin{flalign*}
D(x, C)= &\sup\{ d(x,y)\; :\; y\in C\}, \;\; x\in X;\\
r(C)=& \inf\{ D(x,C)\; :\; x\in C\};\\
\delta(C)=& \sup\{ d(x,y)\; :\; x,y\in C\};\\
\end{flalign*}
Note that $D(x,C)$ is also denoted by $r_x(C)$ in some other references. We prefer $D(x,C)$  since it makes definitions and proofs simpler (see  \cite{EJS0}).

If $X$ is a Banach space, the normal structure coefficient $N(X)$ is  defined as the supremum of the quotients $r(C)/ \delta(C)$ when $C$ is 
a closed convex bounded subset of $X$ with positive diameter. It is said that $X$ 
 has uniform normal structure when $N(X)<1$. 
It is well known, for instance, that $N(X)<1$ implies that the Banach space is reflexive. Now we can state Casini-Maluta Theorem which was originally proved in \cite{CM}.

\begin{theorem}\label{normal} Let $X$ be a Banach space with uniform normal structure. Then, for every closed convex bounded subset $C$ of $X$, any $T:C\to C$ which is uniform $k$-Lipschitzian with $k<{N}(X)^{-1/2}$ has a fixed point. 
\end{theorem}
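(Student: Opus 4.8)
The plan is to follow the argument of Casini and Maluta: iterate the assignment ``$x\mapsto$ a Chebyshev center of the closed convex hull of the orbit $O(x):=\{T^nx:n\ge1\}$'' and control, along the iteration, the quantities $\rho_m:=D\big(x_m,O(x_m)\big)=\sup_{n\ge1}\|x_m-T^nx_m\|$; the uniform Lipschitz bound will make $(\rho_m)$ decrease geometrically exactly under the hypothesis $k<N(X)^{-1/2}$. The only structural input is the following: since $N(X)<1$, $X$ is reflexive, so for every closed convex bounded $K\subseteq X$ with $\delta(K)>0$ the infimum defining $r(K)$ is attained (at what I will call a Chebyshev center of $K$), and $r(K)\le N(X)\,\delta(K)$ straight from the definition of $N(X)$. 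Also, $n=1$ in the definition of uniform $k$-Lipschitzianity makes $T$ $k$-Lipschitz, hence continuous, and $C$ is complete, being closed in a Banach space.

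Fix $x_0\in C$, so $\rho_0\le\delta(C)<\infty$. Given $x_m\in C$: the orbit $O(x_m)$ lies in $C$; from $\|T^ix_m-T^jx_m\|\le k\|x_m-T^{|i-j|}x_m\|$ one gets $\delta\big(O(x_m)\big)\le k\rho_m$; and if $x_{m+1}\in C$ is a Chebyshev center of $\overline{\operatorname{co}}\,O(x_m)$, then
\[
\|x_{m+1}-T^nx_m\|\ \le\ r\big(\overline{\operatorname{co}}\,O(x_m)\big)\ \le\ N(X)\,\delta\big(O(x_m)\big)\ \le\ N(X)k\,\rho_m\qquad\text{for all }n\ge1.
\]
Two consequences follow. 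First, $\|x_m-x_{m+1}\|\le\|x_m-Tx_m\|+\|Tx_m-x_{m+1}\|\le(1+N(X)k)\rho_m$. Second, for $l\ge1$ and any $l_0\ge1$ the triangle inequality $\|x_{m+1}-T^lx_{m+1}\|\le\|x_{m+1}-T^{l+l_0}x_m\|+\|T^l(T^{l_0}x_m)-T^lx_{m+1}\|$ bounds the first term by $N(X)k\rho_m$ and the second by $k\|T^{l_0}x_m-x_{m+1}\|\le N(X)k^2\rho_m$, so $\rho_{m+1}\le N(X)k(1+k)\rho_m$.

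Provided the contraction ratio is $<1$, we get $\rho_m\to0$ geometrically, hence $\sum_m\|x_m-x_{m+1}\|<\infty$; thus $(x_m)$ is Cauchy in the complete set $C$ and converges to some $x^\ast\in C$. Since $\|x_m-Tx_m\|\le\rho_m\to0$ and $\|Tx_m-Tx^\ast\|\le k\|x_m-x^\ast\|$, letting $m\to\infty$ in $\|x^\ast-Tx^\ast\|\le\|x^\ast-x_m\|+\|x_m-Tx_m\|+\|Tx_m-Tx^\ast\|$ gives $Tx^\ast=x^\ast$.

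The one delicate point — and the real obstacle to the theorem exactly as stated — is that the above only produces the ratio $N(X)k(1+k)$, whereas the hypothesis $k<N(X)^{-1/2}$ matches the \emph{sharp} ratio $N(X)k^2$. The surplus ``$1$'' comes from the leg $\|x_{m+1}-T^{l+l_0}x_m\|$, which is $\le N(X)k\rho_m$ but is \emph{not} contracted by a further $k$. To suppress it one needs a finer realization of the iteration — for example, working with asymptotic centers of tails of orbits (legitimate by reflexivity) and selecting the intermediate points so that \emph{both} terms in the relevant triangle inequality appear as images under one common power of $T$, so that each carries the factor $k$; once the refined bound $\rho_{m+1}\le N(X)k^2\rho_m$ (up to an arbitrarily small error) is available, the hypothesis $N(X)k^2<1$ closes the argument precisely as above, with the convergence and fixed-point steps unchanged.
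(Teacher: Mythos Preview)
Your overall strategy matches the Casini--Maluta scheme the paper follows (see the proof of Theorem~\ref{teor:Nwrt}, which specializes to the present statement): build a sequence $(x_m)$ by repeatedly passing to a ``center'' of the orbit, show the orbital radii $\rho_m$ decay geometrically, and conclude by completeness and continuity of $T$. The gap is exactly where you flag it: your construction only yields $\rho_{m+1}\le N(X)k(1+k)\rho_m$, and the closing paragraph does not actually repair this. The suggestion of ``selecting intermediate points so that both terms in the triangle inequality appear as images under one common power of $T$'' is not the correct mechanism; no choice of intermediate point inside a triangle inequality of the shape you wrote will remove the uncontracted summand.

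The device the paper uses avoids the triangle inequality at this step altogether. One chooses $x_{m+1}$ to lie in $\bigcap_{N\ge1}\overline{\operatorname{co}}\{T^nx_m:n\ge N\}$ and to satisfy $D_m:=\limsup_n\|x_{m+1}-T^nx_m\|\le c\,\delta(O(x_m))$ for a fixed $c\in(N(X),k^{-2})$; this is where reflexivity (equivalently, property~(P)) is used, cf.\ Definition~\ref{Psequences} and Remark~\ref{min}. Because $x_{m+1}$ sits in the closed convex hull of every tail $\{T^nx_m:n\ge N\}$, for any $l\ge1$ and $N>l$ one has directly
\[
\|x_{m+1}-T^lx_{m+1}\|\ \le\ \sup_{n\ge N}\|T^lx_{m+1}-T^nx_m\|\ =\ \sup_{n\ge N}\|T^lx_{m+1}-T^lT^{n-l}x_m\|\ \le\ k\sup_{j\ge N-l}\|x_{m+1}-T^jx_m\|,
\]
and letting $N\to\infty$ gives $\rho_{m+1}\le kD_m$. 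Combined with $D_m\le c\,\delta(O(x_m))\le ck\rho_m$ (your own estimate on the diameter) this yields $\rho_{m+1}\le ck^2\rho_m$ with $ck^2<1$, after which your convergence and fixed-point steps go through unchanged. The point is that membership of $x_{m+1}$ in the convex hull of the tail replaces the triangle inequality by a single supremum, so only one factor of $k$ is spent rather than $1+k$.
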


 In their paper, E. Casini and E. Maluta showed some examples of Banach spaces where their approach by using the coefficient $N(X)$ provides better estimations for obtaining fixed points than the Lifschitz's characteristic $\kappa(X)$.  Therefore, Lifschitz Theorem and Casini-Maluta Theorem are not related in general, both providing a different spectrum when it comes to their possible applications. 

  Casini-Maluta Theorem was later extended to metric spaces by T. C. Lim and H. K. Xu in \cite{LX}.
In order to do that, the notion of uniform normal structure was redefined replacing the family of closed convex bounded subsets by  the family of admissible subsets of a metric space $X$.  Recall that a subset $A$ of $X$ is said to be {\it admissible} if $A$ is a nonempty intersection of closed balls of $X$. 
We denote by $\mathcal{A}(X)$ the family of all {\it admissible subsets} of $X$. Admissible sets are a natural substitute for convex sets in the lack of linear structure of the underlying space. In fact, every admissible set in a Banach space is additionally closed convex and bounded. 
Also, for a nonempty subset $C$ of  a metric space $X$, we denote
\begin{flalign*}
{\rm cov} (C)= & \cap\{ B\; :\; B \text{ is a closed ball and $C\subseteq B$}\},
\end{flalign*}
where ${\rm cov} (C)$ is known as the {\it admissible cover or envelope} of $C$ in $X$. For a general metric space $X$, the {\it (metric)  normal structure coefficient $\tilde{N}(X)$} is defined as follows: 
$$
\tilde{N}(X):=\sup\left\lbrace\frac{r(A)}{\delta(A)}\right\rbrace ,
$$ 
where the supremum is taken over all admissible subsets $A$ of $X$ with positive diameter. The metric space $X$ is said to have {\it (metric) uniform normal structure} when
 $\tilde{ N}(X)< 1$.

In a similar way as reflexivity was implied by uniform normal structure in Banach spaces, uniform normal structure implies that the family of admissible sets $\mathcal{A}(X)$ is compact \cite[Theorem 5.4]{KK}. This means that 
the intersection of any collection of elements of $\mathcal{A}(X)$ is nonempty provided that   it satisfies the finite intersection property.

T. C. Lim and H. K. Xu would still require an extra assumption for their counterpart of Casini-Maluta Theorem: the so-called property (P) for sequences.

\begin{definition}\label{Psequences}
	A metric space $X$ is said to have property (P) for sequences if given any two bounded sequences $\{ x_n\}$ and $\{ z_n\}$ in $X$, such that $z_n\in  {\rm cov}(\{x_m: m\ge n\})$, there is $z\in \bigcap_n {\rm cov}(\{ z_m\; :\; m\geq n\})$ such that
	$$
	\limsup_n d(z,x_n)\le \limsup_m\limsup_n d(z_m,x_n).
	$$
\end{definition}

Then, Lim-Xu Theorem reads as follows.

\begin{theorem}
Let $X$ be a bounded metric space with uniform normal structure and property (P). Let $T:X\to X$ be a uniformly $k$-Lipschitzian mapping. If $k<\tilde{N}(X)^{-1/2}$, then $T$ has a fixed point. 
\end{theorem}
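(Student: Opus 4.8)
The plan is to follow the normal-structure (Casini--Maluta) route rather than the Lifschitz/regular-ball route: I construct, by iterating an ``asymptotic centre of the orbit'' operation, a sequence $(x_m)$ along which the quantity $\rho(x):=\limsup_n d(x,T^nx)$ decays geometrically, show $(x_m)$ is Cauchy, extract a limit $z$, and verify $Tz=z$. The scheme rests on two inputs. The first is the normal-structure inequality $r(A)\le \tilde N(X)\,\delta(A)$, valid for every admissible set $A$, which supplies a shrinking factor $\tilde N(X)$ at each step. The second is property (P), which is exactly what makes it legitimate to replace the asymptotic radius and diameter of the orbit \emph{sequence} by the Chebyshev radius and diameter of the admissible \emph{cover} of its tail, so that the normal-structure inequality --- an assertion about admissible sets --- can actually be invoked; property (P) also guarantees that the successive centres can be chosen coherently inside the nested covers.

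For the construction step, fix $x\in X$ and set $\rho=\rho(x)$. Given $\varepsilon>0$ there is $N$ with $\{T^nx:n\ge N\}\subseteq B(x,\rho+\varepsilon)$, and applying $T^m$ and uniform $k$-Lipschitzianity gives $\{T^ix:i\ge m+N\}\subseteq B(T^mx,k(\rho+\varepsilon))$ for every $m$. Let $A=\bigcap_M\mathrm{cov}(\{T^nx:n\ge M\})$, an admissible set, nonempty because uniform normal structure makes $\mathcal A(X)$ compact. Applying property (P) to the orbit sequence $\{T^nx\}$ together with a sequence of near-Chebyshev centres of the tail covers $\mathrm{cov}(\{T^ix:i\ge m\})$ produces a point $x^{+}\in A$ with $\limsup_n d(x^{+},T^nx)\le \tilde N(X)\,\delta(A)+\varepsilon$; moreover $A\subseteq B(x,\rho+\varepsilon)$, so $\delta(A)$, and hence this bound, is controlled by $\rho$, and also $d(x,x^{+})\le\rho+\varepsilon$. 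One then estimates $\rho(x^{+})=\limsup_n d(x^{+},T^nx^{+})$ by inserting a far-out orbit point $T^{n+j}x=T^n(T^jx)$ as a bridge, $d(x^{+},T^nx^{+})\le d(x^{+},T^{n+j}x)+k\,d(T^jx,x^{+})$, and pushing $j\to\infty$. Carrying this bookkeeping out carefully --- the Lipschitz bound being used \emph{twice}, once to push the orbit tail of $x$ near $x^{+}$ and once to compare that tail with the orbit of $x^{+}$ --- yields $\rho(x^{+})\le\theta\,\rho(x)$ with $\theta$ a constant depending only on $k^{2}\tilde N(X)$. The hypothesis $k<\tilde N(X)^{-1/2}$, that is $k^{2}\tilde N(X)<1$, is precisely what forces $\theta<1$.

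Now iterate: from an arbitrary $x_1\in X$ put $x_{m+1}=x_m^{+}$, so that $\rho(x_m)\le\theta^{\,m-1}\rho(x_1)$ and $d(x_m,x_{m+1})\le\theta^{\,m-1}\rho(x_1)+\varepsilon_m$, whence $(x_m)$ is Cauchy. Since $X$ is not assumed complete, I extract the limit from compactness of $\mathcal A(X)$: the nested nonempty admissible sets $\mathrm{cov}(\{x_p:p\ge m\})$ have nonempty intersection, and because $d(x_p,x_q)\to0$ their diameters tend to $0$, so the intersection is a single point $z$. Then $\limsup_n d(z,T^nz)\le d(z,x_m)+\rho(x_m)+k\,d(x_m,z)\to0$ as $m\to\infty$, i.e.\ $T^nz\to z$. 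To conclude $Tz=z$ --- the only place where any regularity of the (possibly wildly discontinuous) $T$ enters, via the fact that a uniformly $k$-Lipschitzian map still satisfies $d(Tu,Tv)\le k\,d(u,v)$, the case $n=1$ --- write $d(Tz,z)\le d(Tz,T^nz)+d(T^nz,z)=d(T(z),T(T^{n-1}z))+d(T^nz,z)\le k\,d(z,T^{n-1}z)+d(T^nz,z)\to0$. Hence $Tz=z$.

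I expect the main obstacle to be the geometric estimate in the construction step: getting the per-step constant $\theta$ to come out as an honest function of $k^{2}\tilde N(X)$, and not of something cruder such as $2k\,\tilde N(X)$, requires applying the normal-structure inequality at the level of admissible covers --- so that the factor is genuinely $\tilde N(X)$ rather than $2\tilde N(X)$ --- and arranging the two uses of the Lipschitz bound so that they compound multiplicatively into $k^{2}$. This is exactly the point where property (P) is indispensable, since without it one cannot transfer the orbit-sequence data into the admissible-set world where $\tilde N(X)$ lives. A secondary, more routine matter is the lack of completeness of $X$, which is handled as above by the compactness of $\mathcal A(X)$.
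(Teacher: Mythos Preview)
Your plan is the same Casini--Maluta/Lim--Xu scheme the paper follows: build $(x_j)$ by successively taking, via property~(P) and uniform normal structure, approximate asymptotic centres of the previous orbit; obtain a geometric contraction with ratio essentially $k^2\tilde N(X)<1$; pass to a limit and verify it is fixed. The paper does this in the greater generality of strong-orbit $k$-Lipschitzian semigroup actions (its main theorem in that section) and then reads off the single-mapping case as a corollary, whereas you work directly with $T$. Your use of the compactness of $\mathcal A(X)$ to locate the limit of the Cauchy sequence without assuming completeness is a nice touch; the paper's semigroup theorem simply assumes completeness.

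There is one bookkeeping slip you should repair. You take the recursion quantity to be $\rho(x)=\limsup_n d(x,T^nx)$ and claim $\delta(A)$ is controlled by $\rho$. But for $p,q$ in any tail, $d(T^px,T^qx)\le k\,d(x,T^{|p-q|}x)$, and $|p-q|$ ranges over \emph{all} nonnegative integers no matter how far out the tail begins; hence the diameter of every tail is bounded by $k\sup_j d(x,T^jx)$, not by $k\rho(x)$, and the claimed inequality $\rho(x^{+})\le\theta\,\rho(x)$ does not follow as stated. The paper avoids this by tracking the \emph{cross} quantity $D_j:=\limsup_n d(x_{j+1},T^nx_j)$ and the full radius $D(x_j,o(x_j))=\sup_n d(x_j,T^nx_j)$, proving
\[
D_j\ \le\ c\,\delta(o(x_j))\ \le\ ck\,D(x_j,o(x_j))\ \le\ ck\cdot k\,D_{j-1},
\]
where the last step uses $x_j\in\bigcap_m\mathrm{cov}(o(T^mx_{j-1}))$ together with the Lipschitz bound (this is exactly where the second factor of $k$ enters, confirming your intuition that the two Lipschitz applications compound into $k^2$). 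If you switch your recursion quantity to either $D(x,o(x))$ or the cross quantity $D_j$, your outline goes through unchanged.
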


\begin{remark}
Property (P) can be understood as a replacement in metric spaces of the weakly lower semicontinuity for the type functions in Banach spaces. In fact, property (P) basically gives a formal condition for Theorem \ref{normal} to hold when convex weakly compact domains are no longer available (see Section $5$ where property (P) is also required). It is a general fact that convex weakly compact sets have property (P).
 \end{remark}


\section{Orbit Lipschitzian actions}

With the purpose of obtaining a common fixed point for a family of mappings acting on a same metric space, we  are going to consider actions defined by semigroups. Therefore, we recall some semigroup notions:

A semigroup $\mathcal S$ is a set with an inner product, $\cdot$,  that is associative. Additionally, if the semigroup contains an  element $1\in S$ such that $s\cdot 1=1\cdot s=s$ for all $s\in\mathcal  S$ and for every $s\in \mathcal S$ there is $s^{-1}\in \mathcal S$  with $s\cdot s^{-1}=s^{-1}\cdot s=1$, then $\mathcal S$ is called a group. Trivially, every group is a semigroup. Following  standard semigroup notation (see for instance the monographs  \cite{libro, Na}),  we denote by $\mathcal{S}^1$ the semigroup $\mathcal{S}$ with an identity adjoined, that is $\mathcal{S}^1:=\mathcal{S}\cup\{1\}$ with $s \cdot 1=1\cdot  s=s$ for all $s\in \mathcal{S}$. If $\mathcal{S}$ already has an identity, then $\mathcal{S}^1=\mathcal{S}$.

Every family of mappings generates  a semigroup under the composition operation and  a common fixed point for all the operators of the family is also a common fixed point for all the elements of the corresponding semigroup. Because of that, it is said that a semigroup $\mathcal{S}$ generates an action  over a  space $X$ when there exists a mapping from ${\mathcal S}\times X$ to $X$ such that $(s,x)\to s(x)\in X$, verifying $s\cdot t (x)=s(t(x))$ for all $s,t\in \mathcal S$ and $x\in X$. As it is standard, we will denote $s(x)$ simply by $sx$  and  by $st$ the product $s\cdot t$ for every $s,t\in \mathcal S$ and $x\in X$.

\medskip

In what follows, we will denote by $\mathcal{S}$  a given semigroup and we will denote by $(\mathcal{S},X)$ an action generated by  $\mathcal{S}$ over a metric space $(X,d)$. 
The orbit of $x\in X$ under the action $(\mathcal{S},X)$ is defined by
$$
o(x)=\{x\}\cup \{ sx: s\in\mathcal{S}\}=\{sx: s\in\mathcal{S}^1\}.
$$

\begin{definition}\label{FamUniLips}
An action  $({\mathcal S},X)$ is said to be  $k$-Lipschitzian  for $k>0$  if 
	$$
	d(sx,sy)\le kd(x,y)
	$$
	for all $x,y\in X$ and  $s\in  {\mathcal S}$. \end{definition}

\begin{remark}
Taking into account that a single mapping $T:X\to X$ generates the semigroup $\mathcal{S}=\{T^n,n\ge 1\}$ and the semigroup action $\mathcal{S}\times X\to X$, $(T^n,x)\to T^nx\in X$, the definitions  of $T$ being  uniformly $k$-Lipschitzian and that $\mathcal{S}:=\{T^n:n\ge 1\}$ being a $k$-Lipschitzian  action coincide. 
\end{remark}

When a single mapping $T:X\to X$ is considered, the orbit of $x\in X$ under $T$ is just the orbit of $x$ under the action $\{T^n:n\ge 1\}$.

The following definition was introduced by A. Nicolae in \cite{N} and widely studied by the authors in \cite{EJS} for metric spaces under metric normal structure assumptions:

\begin{definition}
A mapping $T:X\to X$ is said to be  orbit-nonexpansive if 
	$$
	d(Tx,Ty)\le  D(x, o(y))
	$$
	for all $x,y\in X$.
\end{definition}

The following concept naturally 
 extends  the notion of uniformly Lipschitzian mappings and encompasses orbit-nonexpansive operators  as a particular case:

 \begin{definition}\label{uo} A mapping $T:X\to X$ is said to be {\it orbit uniformly  $k$-Lipschitzian } if there is some $k\in (0,+\infty)$ such that
	$$
	d(T^nx,T^ny)\le k D(x, o(y))
	$$
	for all $x,y\in X$ and $n\in \mathbb N$.
	\end{definition}

It is easy to prove that every orbit-nonexpansive mapping is orbit uniformly  $1$-Lipschitzian.
We next show an example of a noncontinuous orbit uniformly  $k$-Lipschitzian mapping. The proof follows the same arguments as those of \cite[Example 2.2]{EJS}.

\begin{example}\label{e3}
Let $0<a<1$, $a\in\mathbb{Q}$ and 
$S_a:[-1,1]\to [-1,1]$ given by
$$
S_a(x)=\left\{
\begin{array}{ll}
\ \ { a x}& \mbox{if $x$ is irrational}\\
{-a x} & \mbox{ if $x$ is rational}\\
\end{array}\right.
$$
Then $S_a$ is orbit uniform  $k$-Lipschitzian  for $k=3a$. \end{example}

Being orbit uniformly $k$-Lipschitzian can also be considered on the action of a semigroup as the next definition states. 

\begin{definition}\label{wrt}
An action $({\mathcal S},X)$   is said to be {\it orbit $k$-Lipschitzian}  if
	$$
	d(sx,sy)\le k D(x,o(y))
	$$
	for all $x,y\in X$ and $s\in \mathcal S$.	
\end{definition}

The following assertions are now straightforward:
\begin{lemma} 
	\begin{enumerate}
	\item Let $T\colon X\to X$ be  uniformly $k$-Lipschitzian. Then $T$ is orbit uniformly  $k$-Lipschitzian.
	
	\item Let $({\mathcal S},X)$ be  a $k$-Lipschitzian action.   Then $(\mathcal{S}, X)$ is orbit $k$-Lipschitzian.
	\end{enumerate}

\end{lemma}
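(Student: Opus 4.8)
The plan is to unwind the definitions and check two almost trivial containments. Both parts say that an ordinary Lipschitz bound implies the corresponding orbital Lipschitz bound, so the only fact we need is that the ordinary distance $d(x,y)$ is dominated by the orbital quantity $D(x,o(y))$.

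\begin{proof}
For any $x,y\in X$ we have $y\in o(y)=\{sy:s\in\mathcal S^1\}$, since $o(y)$ contains $y$ itself. Hence, directly from the definition of $D(\cdot,\cdot)$,
$$
d(x,y)\le \sup\{d(x,z):z\in o(y)\}=D(x,o(y)).
$$

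(1) Assume $T\colon X\to X$ is uniformly $k$-Lipschitzian. Then for all $x,y\in X$ and all $n\in\mathbb N$,
$$
d(T^nx,T^ny)\le k\,d(x,y)\le k\,D(x,o(y)),
$$
where the first inequality is the defining property of a uniformly $k$-Lipschitzian mapping and the second is the inequality displayed above. By Definition \ref{uo}, $T$ is orbit uniformly $k$-Lipschitzian.

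(2) Assume $(\mathcal S,X)$ is a $k$-Lipschitzian action. Then for all $x,y\in X$ and all $s\in\mathcal S$,
$$
d(sx,sy)\le k\,d(x,y)\le k\,D(x,o(y)),
$$
the first inequality being Definition \ref{FamUniLips} and the second the inequality above. By Definition \ref{wrt}, $(\mathcal S,X)$ is orbit $k$-Lipschitzian.
\end{proof}

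There is no real obstacle here; the proof is purely a matter of recognising that $y$ belongs to its own orbit, so $d(x,y)\le D(x,o(y))$, and then composing this with the two Lipschitz hypotheses. If anything, the only point worth a moment's care is making sure the orbit is defined to include the base point (which it is, via $\mathcal S^1$), so that the single-point membership $y\in o(y)$ is legitimate even when $\mathcal S$ has no identity of its own.
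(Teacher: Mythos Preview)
Your proof is correct and matches the paper's approach: the paper simply states that ``the following assertions are now straightforward'' and omits any argument, which is exactly what you have supplied by observing $y\in o(y)$ so that $d(x,y)\le D(x,o(y))$ and then chaining this with the respective Lipschitz hypotheses.
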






\medskip

 We introduce next one more orbit Lipschitzian condition on a family of mappings that will play a main role in our exposition. 

\begin{definition}\label{general}
	An action $(\mathcal{S},X)$ is said to be {\it strong-orbit  $k$-Lipschitzian } if
	\begin{equation}\label{ostar}
D(sx, o(sy))\le k D(x,o(y)).
\end{equation}
for all $s\in\mathcal{S}$ and $x,y\in X$. 
	 \end{definition}

It is clear that a {\it strong-orbit $k$-Lipschitzian} action is an {\it orbit $k$-Lipschitzian} action too since $d(sx,sy)\le D(sx, o(sy))$ for all $s\in\mathcal{S}$ and $x,y\in X$. We will next check that, for a wide range of semigroups, both notions are in fact equivalent. The next result explores the case of a single mapping. 

\begin{lemma}\label{lemaencompass}
	Assume that  $T\colon X\to X$ is a uniformly $k$-Lipschitzian mapping or, more generally, an orbit uniformly $k$-Lipschitzian mapping. Then the semigroup ${\mathcal S}=\{T^n\colon n\in{\mathbb N}\}$ satisfies the strong-orbit $k$-Lipschitzian condition.
\end{lemma}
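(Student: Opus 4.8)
The plan is to show directly that for the semigroup $\mathcal{S} = \{T^n : n \in \mathbb{N}\}$, the inequality \eqref{ostar} holds, i.e. $D(T^m x, o(T^m y)) \le k\, D(x, o(y))$ for every $m \in \mathbb{N}$ and every $x, y \in X$. The key observation is that the orbit $o(T^m y)$ is a \emph{tail} of the orbit $o(y)$: concretely, $o(T^m y) = \{T^m y\} \cup \{T^n T^m y : n \ge 1\} = \{T^j y : j \ge m\} \subseteq o(y)$. So every point of $o(T^m y)$ has the form $T^j y$ with $j \ge m$, and in particular $j \ge 1$, which is exactly the regime where the orbit uniform $k$-Lipschitzian hypothesis $d(T^n u, T^n v) \le k D(u, o(v))$ applies (it is stated for $n \in \mathbb{N}$).

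First I would fix $x, y \in X$ and $m \in \mathbb{N}$, and estimate $D(T^m x, o(T^m y)) = \sup\{ d(T^m x, T^j y) : j \ge m\}$. For a single term with $j \ge m$, write $j = m + i$ with $i \ge 0$; if $i = 0$ then $d(T^m x, T^m y) \le k D(x, o(y))$ by the orbit uniform Lipschitz condition applied with $n = m$. If $i \ge 1$, then $d(T^m x, T^j y) = d(T^m x, T^m(T^i y))$, and again applying the orbit uniform $k$-Lipschitzian condition with $n = m$ to the pair $(x, T^i y)$ gives $d(T^m x, T^m(T^i y)) \le k D(x, o(T^i y)) \le k D(x, o(y))$, where the last inequality uses $o(T^i y) \subseteq o(y)$. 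Taking the supremum over all $j \ge m$ yields $D(T^m x, o(T^m y)) \le k D(x, o(y))$, which is \eqref{ostar}. Since a uniformly $k$-Lipschitzian mapping is in particular orbit uniformly $k$-Lipschitzian (by the Lemma just above), the more general hypothesis suffices to cover both cases stated.

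The argument is essentially bookkeeping about orbit tails, so I do not expect a genuine obstacle; the one point to be careful about is the indexing convention for orbits — namely that $o(y)$ includes the base point $y = T^0 y$ while the Lipschitz estimate is only assumed for exponents $n \ge 1$. This is precisely why one passes to $o(T^m y)$, whose every element is $T^j y$ with $j \ge m \ge 1$: applying the hypothesis to iterate $T^m$ (with $m \ge 1$) acting on the pair $(x, T^i y)$ keeps us safely in the valid range. A secondary minor check is that $D(x, o(T^i y)) \le D(x, o(y))$ follows at once from $o(T^i y) \subseteq o(y)$ and the definition of $D$ as a supremum.
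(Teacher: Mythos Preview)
Your proof is correct and takes essentially the same approach as the paper: both fix the iterate and bound $d(T^m x, T^{m+i} y)$ by $k\,D(x,o(y))$ via the Lipschitz hypothesis together with the inclusion $o(T^m y)\subseteq o(y)$. Your version is in fact slightly more careful, since the paper's intermediate step $d(T^n x, T^{m+n}y)\le k\,d(x,T^m y)$ literally invokes the uniform (not orbit-uniform) $k$-Lipschitz condition, whereas you pass through $k\,D(x,o(T^i y))\le k\,D(x,o(y))$, which genuinely covers the orbit uniformly $k$-Lipschitzian case as stated.
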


\begin{proof}

Fix $n\in\mathbb{N}$. Then for all $m\in\mathbb{N}\cup\{0\}:$
$$
d(T^nx, T^{m+n}y)\le k d(x,T^my)\le k D(x, o(y))
$$
which concludes that for all $m\in\mathbb{N}$:
$$
D(T^n x, o( T^n y))\le k D(x,o(y))
$$ 
for all $(x,y)\in X\times X$.
\end{proof}

Both notions will also be equivalent of some classes of semigroups of mappings.

\begin{lemma}\label{lemaencompass2} 
Assume that $\mathcal{S}$ is a semigroup  with the property that for all 
$s\in\mathcal{S}$ it must be the case that 
$$
\mathcal{S}s\subset s\mathcal{S}.
$$
Then,  if $\mathcal{S}$ is an orbit $k$-Lipschitzian action, it is a strong-orbit $k$-Lipschitzian action too. In particular, this holds  whenever $\mathcal{S}$ is a group or a commutative semigroup. 
\end{lemma}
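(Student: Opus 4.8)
The plan is to show that the hypothesis $\mathcal{S}s\subset s\mathcal{S}$ lets us ``move'' any semigroup element past a fixed $s$, so that an arbitrary orbit point of $sy$ can be rewritten as $s$ applied to an orbit point of $y$, at which point the plain orbit $k$-Lipschitzian inequality does the work. Concretely, fix $s\in\mathcal{S}$ and $x,y\in X$, and take an arbitrary point of $o(sy)$; this has the form $t(sy)=(ts)y$ for some $t\in\mathcal{S}^1$ (or it is $sy$ itself, the case $t=1$, which is handled separately or by working in $\mathcal{S}^1$). The hypothesis gives $ts=st'$ for some $t'\in\mathcal{S}$, hence $t(sy)=s(t'y)\in s\cdot o(y)$. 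Therefore
\begin{equation*}
d(sx,\,t(sy))=d(sx,\,s(t'y))\le k\,D(x,o(y)),
\end{equation*}
where the last inequality is exactly the orbit $k$-Lipschitzian condition applied to the pair $(x,t'y)$ after noting $d(sx,s(t'y))\le kD(x,o(t'y))$ — wait, more directly: the orbit $k$-Lipschitzian condition in the form $d(su,sv)\le kD(u,o(v))$ with $u=x$, $v=y$ is not quite what we need; instead we use it with $v=y$ after observing $t'y\in o(y)$, so $d(sx,s(t'y))\le kD(x,o(y))$ requires $d(sx,s w)\le kD(x,o(y))$ for every $w\in o(y)$, which follows from applying Definition~\ref{wrt} to the pair $(x,w')$ only if $o(w')\subset o(y)$. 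Since $w=t'y$ and $o(t'y)\subset o(y)$, indeed $D(x,o(t'y))\le D(x,o(y))$, and Definition~\ref{wrt} gives $d(sx,s w)\le kD(x,o(w))\le kD(x,o(y))$.

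Taking the supremum over all such $t\in\mathcal{S}^1$ yields $D(sx,o(sy))\le kD(x,o(y))$, which is precisely the strong-orbit $k$-Lipschitzian condition \eqref{ostar}. For the final sentence of the statement, I would observe that a commutative semigroup trivially satisfies $\mathcal{S}s\subset s\mathcal{S}$ (indeed $\mathcal{S}s=s\mathcal{S}$), and for a group one writes $ts=s(s^{-1}ts)$ with $s^{-1}ts\in\mathcal{S}$, so again $\mathcal{S}s\subset s\mathcal{S}$; hence both cases fall under the general hypothesis.

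The only genuinely delicate point is the bookkeeping with the adjoined identity: the orbit $o(sy)$ includes the point $sy$ itself (corresponding to $t=1$), and one must make sure the rewriting $t(sy)=s(t'y)$ still covers this case. This is immediate by taking $t'=1$, i.e. working throughout with $\mathcal{S}^1$ and noting that $\mathcal{S}^1 s\subset s\mathcal{S}^1$ follows from $\mathcal{S}s\subset s\mathcal{S}$ together with $1\cdot s=s\cdot 1$. Everything else is a routine supremum argument, so I do not anticipate any real obstacle beyond stating the identity-element case cleanly.
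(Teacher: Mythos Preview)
Your argument is correct and is essentially the same as the paper's: rewrite an arbitrary orbit element $p(sy)$ as $s(qy)$ via $\mathcal{S}s\subset s\mathcal{S}$, apply the orbit $k$-Lipschitzian inequality to the pair $(x,qy)$, and then use $o(qy)\subset o(y)$ to bound $D(x,o(qy))$ by $D(x,o(y))$. If anything you are more careful than the paper, which glosses over the $p=1$ case and has a couple of typographical slips in its displayed chain; your handling of $\mathcal{S}^1$ and the explicit inclusion $o(t'y)\subset o(y)$ is exactly the clean way to do it.
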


\begin{proof}	
Let $s\in \mathcal S$ and $x,y\in X$. Take $p\in\mathcal{S}$. Since $ps\in \mathcal{S}s\subset s\mathcal{S}$, there exists $q\in\mathcal{S}$ such that $ps=sq$. Then
$$
d(sx,psy)=d(sx, qsy)\le k D(x, o(sy))\le k D(x,o(y)).
$$	
Taking supremum with respect to $p\in \mathcal{S}^1$, we finally obtain that
$$
D(x, o(sy))\le k D(x,o(y)).
$$
\end{proof}


\begin{remark} The  condition ${\mathcal S}s\subset s{\mathcal S}$ for all $s\in {\mathcal S}$ is verified for  larger families of  semigroups others than commutative ones and groups as, for instance, the left (right) quasi commutative semigroups (a semigroup is called  left (right) quasi commutative if for every $a,b\in\mathcal{S}$, there exists a positive integer $r$ such that $ab=b^ra$ ($ab=ba^r$)). 
Additionally, if $\mathcal{S}$ is a normal semigroup of a group ($t^{-1}st\in \mathcal{S}$ for all $s\in\mathcal{S}$) the above condition also holds. $\mathcal{H}$-commutative semigroups  also  fall within the the scope of Lemma  \ref{lemaencompass2} (see \cite[Chapter 5]{Na}).

\end{remark}

We close this section with a last remark about the boundedness of orbits.

\begin{remark}
In case that a semigroup action $\mathcal{S}$ provides unbounded orbits for every $x\in X$, it is clear that there is no fixed point and that the action provided by $\mathcal{S}$ trivially satisfies all orbit conditions above considered. Nevertheless, in the next two sections we will show common fixed point results as long as adequate conditions are given and there exists an orbit which is bounded. 
\end{remark}

\section{Strong orbital Lipschitzian actions and the Lifschitz characteristic $\kappa(X)$}

In this section we study the existence of common fixed points for strong-orbit $k$-Lipschitzian actions in terms of the Lifschitz characteristic $\kappa(X)$ of the metric space. In fact,  we will consider a weaker Lipschitzian condition on the orbits of the action than this one. This is given by the next result which is the main one in this section.

\begin{theorem}\label{main}
Let $X$ be a complete metric space and $\mathcal{S}$ a semigroup. Assu\-me that $({\mathcal S},X)$ is an action verifying the following condition: There exists $k<\kappa(X)$ such that:
\begin{equation}\label{star}
\begin{array}{c}
	 \mbox{ For  $(x,y)\in X\times X$ with $D(x,o(y))\le D(x,o(x))$ and  $\forall s\in \mathcal S$:}\\
	\\
	\inf_{t\in\mathcal{S}^{1}}D(sx, o(tsy))\le k D(x,o(y)).
\end{array}
\end{equation}
 Then either all the orbits are unbounded  or there exists some $x\in X$ such that $sx=x$ for all $s\in \mathcal S$. 
\end{theorem}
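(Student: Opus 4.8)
\emph{Proof strategy.}
The plan is to produce a point $p$ with $D(p,o(p))=0$, which is exactly the assertion that $sp=p$ for every $s\in\mathcal S$. I will assume that not all orbits are unbounded and fix $x_0$ with $D(x_0,o(x_0))<\infty$; I also fix $k<c<\kappa(X)$ together with the constants $\mu,\alpha\in(0,1)$ witnessing $c$-regularity of balls for the value $k$, and set $\alpha'=\alpha/(1+\mu)$ and $\beta=(1-\mu)/(1+\mu)$, both in $(0,1)$. The functional to be iterated will be the asymptotic Chebyshev radius of the orbit,
$$
\psi(x)=\inf_{t\in\mathcal S^1}\ \inf_{w\in X}D\big(w,o(tx)\big),
$$
which is finite whenever $o(x)$ is bounded, satisfies $\psi(x)\le D(x,o(x))$, and enjoys the monotonicity $\psi(tx)\ge\psi(x)$ for all $t\in\mathcal S^1$ (because $\mathcal S^1t\subseteq\mathcal S^1$, so the orbits of the points $tx$ form a subfamily of those of the points $vx$, $v\in\mathcal S^1$).

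The heart of the argument will be a one-step shrinking lemma: if $o(x)$ is bounded and $\delta:=\psi(x)>0$, then for every $\eta>0$ there is a point $x'$ with $o(x')$ bounded, $D(x',o(x'))\le\delta+\eta$, and $\psi(x')\le\max\{\alpha',\beta\}\,\delta+\eta$. To prove it I would choose a ``deep'' orbit point $u=t_0x$ with $\inf_wD(w,o(u))<\psi(x)+\eta$ and an almost-Chebyshev center $w_0$ of $o(u)$, so that $o(u)\subseteq B(w_0,\delta+\eta)$ and $\delta\le D(w_0,o(u))\le\delta+\eta$; the point $x'$ will be $w_0$, and one distinguishes three cases according to the size of $D(w_0,o(w_0))$. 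If $D(w_0,o(w_0))$ lies below the regularity threshold, roughly $\beta(\delta+\eta)$, then already $\psi(w_0)\le D(w_0,o(w_0))\le\beta\delta+\eta$. If instead $D(w_0,o(w_0))\ge D(w_0,o(u))$, the pair $(w_0,u)$ meets the admissibility constraint $D(w_0,o(u))\le D(w_0,o(w_0))$ of \eqref{star}; picking $s_1\in\mathcal S$ with $d(w_0,s_1w_0)$ above the threshold and applying \eqref{star} to $(w_0,u)$ with this $s_1$ produces $z=t_1s_1u\in o(u)$ with $o(z)\subseteq B(w_0,\delta+\eta)\cap B(s_1w_0,k(\delta+\eta))$, whence $c$-regularity gives $\inf_wD(w,o(z))\le\alpha'(\delta+\eta)$; but $z=(t_1s_1t_0)x$, so $\psi(z)\ge\psi(x)=\delta$, and letting $\eta\to0$ forces $\delta=0$, so this case is vacuous when $\delta>0$. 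In the remaining intermediate regime the same argument works with the pair $(w_0,w_0)$, whose admissibility is automatic: applying \eqref{star} to $(w_0,w_0)$ and $c$-regularity to a suitable $z=t_1s_1w_0\in o(w_0)$ gives $\psi(w_0)\le\psi(z)\le\inf_wD(w,o(z))\le\alpha'(\delta+\eta)$.

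With the lemma in hand I would start from $x_0$ (if $\psi(x_0)=0$, the argument of the final step already applies) and iterate it with a sequence of tolerances $\eta_j>0$ chosen summable and decaying fast enough, obtaining $x_0,x_1,x_2,\dots$ with $\psi(x_j)\to0$ and $D(x_j,o(x_j))\to0$. Since each $d(x_j,x_{j+1})$ is bounded by $D(x_j,o(x_j))$ together with the radius (close to $\psi(x_j)$) of the ball produced at the $j$-th step, the series $\sum_j d(x_j,x_{j+1})$ converges; thus $(x_j)$ is Cauchy, and by completeness of $X$ it converges to some $p$. To see that $p$ is a common fixed point, suppose $D(p,o(p))>0$. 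Since $D(p,o(x_j))\le d(p,x_j)+D(x_j,o(x_j))\to0$, for large $j$ the pair $(p,x_j)$ satisfies the constraint $D(p,o(x_j))\le D(p,o(p))$ of \eqref{star}; hence for every $s\in\mathcal S$ we get $\inf_{t\in\mathcal S^1}D(sp,o(tsx_j))\le k\,D(p,o(x_j))\to0$, so one may choose $t$ with $d(sp,tsx_j)\to0$, and since $tsx_j\in o(x_j)$ also $d(p,tsx_j)\le D(p,o(x_j))\to0$; therefore $d(p,sp)=0$ for every $s$, contradicting $D(p,o(p))>0$. Thus $D(p,o(p))=0$, as desired.

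I expect the shrinking lemma to be the main obstacle. The hypothesis $k<\kappa(X)$ enters precisely in making the contraction ratio $\max\{\alpha',\beta\}$ strictly less than $1$ through $c$-regularity; the three-way split is forced by the fact that \eqref{star} may only be applied to a pair $(x,y)$ with $D(x,o(y))\le D(x,o(x))$, which is why one must compare $D(w_0,o(w_0))$ with $D(w_0,o(u))$ and, in the intermediate case, fall back on the pair $(w_0,w_0)$. The surrounding bookkeeping — arranging the auxiliary tolerances and thresholds so that all the ball inclusions needed for $c$-regularity hold simultaneously, checking that each new center still has a bounded orbit (it does, since $D(w_0,o(w_0))\le\delta+\eta<\infty$ outside the vacuous case), and verifying that $\psi$ actually drops to $0$ rather than merely staying bounded along the constructed sequence — is tedious but routine.
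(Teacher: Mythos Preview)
Your overall architecture --- iterate a one-step shrinking procedure to obtain a Cauchy sequence whose limit is a common fixed point --- is the same as the paper's, but the functional you iterate on is genuinely different. The paper works with
\[
r(x)=\inf_{y\in X}D\big(x,o(y)\big),
\]
and its shrinking step produces the point $z$ coming from $c$-regularity applied to the pair $(x,s_0x)$, with two cases according to whether $D(x,o(x))<(1+\mu)r(x)$. Your $\psi(x)=\inf_{t\in\mathcal S^1}\inf_{w\in X}D(w,o(tx))$ measures instead the minimal Chebyshev radius of tails of $o(x)$, and your shrinking step moves to a near-Chebyshev center $w_0$ of such a tail, with a three-way split on $D(w_0,o(w_0))$.

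The paper's choice buys something important: they show directly from \eqref{star} and the triangle inequality that $r(x)=0$ forces $sx=x$ for every $s$, so the iteration has a clean target. Your $\psi$ lacks this feature --- $\psi(x)=0$ only says that some sub-orbit of $o(x)$ has arbitrarily small Chebyshev radius, and this does not by itself hand you a fixed point. That is exactly where your argument has a gap. Your Case~2 is declared vacuous via $\psi(z)\ge\psi(x)=\delta$ together with $\psi(z)\le\alpha'(\delta+\eta)$; but if $\delta=0$ there is no contradiction, Case~2 is not vacuous, and you have not produced an $x'$ with controlled $D(x',o(x'))$. So if $\psi(x_0)=0$, or if $\psi(x_j)$ happens to vanish at some later step (nothing in your recursion prevents this), the iteration stalls. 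Your remark that ``the argument of the final step already applies'' when $\psi(x_0)=0$ is not justified: the final step needs a \emph{single} point $p$ together with a sequence $(y_j)$ satisfying $D(p,o(y_j))\to 0$, whereas from $\psi(x_0)=0$ you only obtain pairs $(w_n,u_n)$ with $D(w_n,o(u_n))\to 0$, and the $w_n$ have no reason to converge.

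The gap is repairable. Applying \eqref{star} to the admissible pair $(w_0,u)$ exactly as in your Case~2, but stopping at the triangle inequality $d(w_0,sw_0)\le d(w_0,t_1su)+d(t_1su,sw_0)$ rather than invoking $c$-regularity, gives $D(w_0,o(w_0))\le(1+k)\,D(w_0,o(u))$ whenever $D(w_0,o(w_0))\ge D(w_0,o(u))$; this lets you continue the construction with controlled $D(x_{j+1},o(x_{j+1}))$ even when $\psi(x_j)=0$. But this is precisely the paper's opening computation that $r(x)=0$ implies $x$ is fixed, transplanted into your framework --- and it is the piece your proposal does not supply. Working with $r$ from the outset, as the paper does, avoids the whole detour.
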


\begin{proof}

Take some $k_0\in (k, \kappa(X))$.

Assume that there is $z\in X$ for which the orbit is bounded.  For $x\in X$, define
$$
r(x):=\inf \{ D(x,o(y)): y\in X \}
$$
which, from our assumption, gives a nonnegative finite number.

Firstly, let us check  that $x\in X$ is a common fixed point for $\mathcal S$  whenever $r(x)=0$. Fix $s\in \mathcal S$. 

Let  $\epsilon>0$ be arbitrary.  We want to show that $d(x,s(x))\le \epsilon$ for any $s\in \mathcal S$. Take $y\in X$ such that $\displaystyle D(x, o(y))\le{ \epsilon\over 1+k_0}$. If   $D(x,o(x))<D(x, o(y))$, since $D(x, o(y))<\epsilon$, we are done.  Otherwise, we can consider condition (\ref{star}) and so there is some $t_0\in\mathcal{S}$ such that 
$$
D(sx, o(t_0sy))\le k_0 D(x,o(y)).
$$
Therefore
$$
\begin{array}{lll}
d(x, sx)&\le & d(x, t_0sy)+d( sx,t_0sy))\le D\left(x,o(y)\right)+ D\left(sx,o(t_0sy)\right)\\
           &  \le & D\left(x,o(y)\right)+k_0 D\left(x,o(y)\right) \\
           & \le &  (1+k_0)D\left(x,o(y)\right) \le \epsilon,
           \end{array}
           $$
as we wanted to prove.

\medskip

We can assume now, with no loss of generality, that there exists $x\in X$ such that $r(x)\in (0,+\infty)$. We will show that there exists $p\in X$ with $r(p)=0$ and so our proof will be complete. Set $r:=r(x)>0$ and consider $\mu,\alpha\in (0,1)$ as in Definition \ref{c}. In particular, $r\le D(x,o(x))$, which implies that
there exists some $s_0\in \mathcal S$ with 
$$
(1-\mu)r< d(x, s_0x).
$$
Take $z\in X$ given by Definition \ref{c}  such that 
\begin{equation}\label{1}
B(x, (1+\mu)r)\cap B(s_0x, k_0 (1+\mu)r)\subset B(z, \alpha r).
\end{equation}
Note that $z$ depends on the point $x$. 
We next prove that $r(z)\le \alpha r$. We split the proof in two cases:

\medskip

{\it Case 1:} $D(x,o(x)) <r(1+\mu)$.

From  condition (\ref{star}), take $t_0\in\mathcal{S}$ verifying 
$$
D(s_0x, o(t_0s_0x))\le k_0 D(x,o(x))<k_0 r(1+\mu).
$$
In particular,  $o(t_0s_0x)\subset o(x)\subset B(x,r(1+\mu))$. Additionally,  the above inequalities imply $o(t_0s_0x)\subset B(s_0x, k_0 (1+\mu)r)$. From $(\ref{1})$ we deduce that $o(t_0s_0x)\subset B(z,\alpha r)$ and $r(z)\le \alpha r$. Additionally, 
$$
d(z,x)\le d(z, t_0s_0x)+d(t_0s_0x,x)\le \alpha r+ r(1+\mu)= Ar
$$
where $A:=\alpha + 1+\mu$. 

\medskip

We next prove that  $D(z,o(z))<(1+k_0)\alpha r$:

\medskip

Again we consider two subcases:

{\it Case 1.1:}  Assume $D(z,o(z))\le D(z,o(t_0s_0x))$, which directly implies that
$$D(z,o(z))<\alpha r<(1+k_0)\alpha r.
$$

{\it Case 1.2:}  Assume otherwise that $D(z,o(t_0s_0x))\le D(z,o(z))$. Then fix  any $t\in \mathcal{S}$. According to condition (\ref{star}) applied to the points $z$ and $t_0s_0 x$, there is some $t_1\in\mathcal{S}$ with 
$$
D(tz, o(t_1 tt_0s_0x))\le k_0 D(z, o(t_0s_0x)).
$$
 Therefore,
$$
\begin{array}{lll}
d(z,tz) &\le & d(z, t_1 t t_0 s_0 x)+ d( tz, t_1 t t_0 s_0 x)\\
             &\le & D(z, o(t_0s_0x))+ D(tz, o(t_1 t t_0 s_0x))\\
           & \le & D(z, o(t_0s_0x))+ k_0 D(z, o(t_0s_0x))\\
           &\le & (1+k_0)\alpha r.\\
           \end{array}
           $$
Taking supremum when $t\in \mathcal{S}$, we finally deduce
\begin{equation}\label{xy}
D(z, o(z))\le (1+k_0)\alpha r.
\end{equation}

\medskip 

{\it Case 2:} Assume now that {\it Case 1} fails so $r(1+\mu)\le  D(x,o(x)).$ Now we can select some $y\in X$ verifying 
$$
D(x,o(y))<r(1+\mu)\le D(x,o(x)).
$$
According to condition (\ref{star}),  we can choose $t_1\in\mathcal{S}$ such that
$$
D(s_0x, o(t_1 s_0y))\le k_0 D(x, o(y))\le k_0(1+\mu)r.
$$
Note that $ o(t_1 s_0y)\subset o(y)\subset B(x, (1+\mu)r)$ and
$$
o(t_1 s_0y)\subset B(s_0x, k_0 (1+\mu)r).
$$
Together with (\ref{1}), we deduce that $o(t_1s_0y)\subset B(z,\alpha r)$ and
$$
r(z)\le \alpha r.
$$
Besides,
$$
d(x,z)\le d(x,t_1 s_0y)+d(t_1 s_0 y,z)\le Ar.
$$
Likewise, replacing $x$ by $y$ in a previous argument,  we can  deduce that $D(z, o(z))\le (1+k)\alpha r.$

In any case, we have found $z\in X$ such that $r(z)\le \alpha r$, $d(x,z)\le A r$ and $D(z, o(z))\le (1+k)\alpha r.$

From this moment on, we will proceed following some standard arguments in order to find a Cauchy sequence whose limit $p$  verifies  $r(p)=0$ and therefore is necessarily a common fixed point for the action. Let us call $x_0:=x$ and take $z=z(x)$, the point $z$ depending on $x$ as above. Then, we define $x_1:=z$.  If $r(x_1)=0$ we make $x_1=p$ and we are done, otherwise we can reason as above and choose $x_2=z(x_1)$. Following this inductive argument, we construct a sequence $(x_n)\subset X$ verifying the following properties: 
 
 \begin{itemize}
 
 \item[i)] $r(x_{n})\le \alpha r(x_{n-1})$

 \item[ii)] $d(x_{n},x_{n-1})\le A r(x_{n-1})$.

 \item[iii)] $D(x_{n},o(x_{n}))\le (1+k)\alpha r(x_{n-1})$
 
 \end{itemize}
for all $n\in\mathbb{N}$.

Properties i) and ii) imply that $(x_n)$ is a Cauchy sequence in $X$ and, by completeness, there is $p\in X$ such that $\lim_n d(x_n,p)=0$.  Note that
$$
D(p,o(x_n))\le d(p,x_n)+ D(x_n, o(x_n)).
$$
Therefore $D(p,o(x_n))$ tends to $0$ as $n$ goes to infinity.  This finally shows that $r(p)=0$ and so $p$ is a common fixed point for the action $\mathcal S$ on $X$, which completes the proof. 
\end{proof}

Since it is direct to check that being a strong-orbit $k$-Lipschitzian action implies that the action satisfies condition (\ref{star}), the next corollary is immediate.
	
\begin{corollary}
	Let $X$ be a complete metric space and $\mathcal{S}$ a semigroup. Assu\-me that $({\mathcal S},X)$ is a strong-orbit $k$-Lipschitzian  action with $k<\kappa (X)$.
	Then either all the orbits are unbounded  or there exists some $x\in X$ such that $sx=x$ for all $s\in \mathcal S$. 
\end{corollary}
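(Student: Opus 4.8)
The plan is to obtain this corollary as an immediate consequence of Theorem \ref{main}, so the whole task reduces to checking that a strong-orbit $k$-Lipschitzian action automatically satisfies condition (\ref{star}) with the same constant $k<\kappa(X)$. Recall that being strong-orbit $k$-Lipschitzian means
$$
D(sx, o(sy))\le k D(x,o(y))
$$
for \emph{all} $s\in\mathcal{S}$ and all $x,y\in X$, with no restriction whatsoever on the pair $(x,y)$.

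First I would observe that $1\in\mathcal{S}^{1}$ and $1\cdot(sy)=sy$, so that $o(sy)$ is exactly the set $o(tsy)$ corresponding to the index $t=1$; hence $o(sy)$ is one of the sets over which the infimum in (\ref{star}) is taken. Consequently, for every $s\in\mathcal{S}$ and every $x,y\in X$,
$$
\inf_{t\in\mathcal{S}^{1}}D(sx, o(tsy))\le D(sx, o(sy))\le k D(x,o(y)).
$$
In particular this inequality holds for those pairs $(x,y)$ satisfying $D(x,o(y))\le D(x,o(x))$, which is precisely what (\ref{star}) demands; in fact the strong-orbit hypothesis yields strictly more, since it provides the bound for every pair. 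Then I would simply invoke Theorem \ref{main} with this same $k<\kappa(X)$ to conclude that either all orbits are unbounded or there exists some $x\in X$ with $sx=x$ for all $s\in\mathcal{S}$.

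There is essentially no obstacle here: the corollary is just a restatement of Theorem \ref{main} under a cleaner and more readily verifiable hypothesis. The only point I would be careful about is the appearance of $\mathcal{S}^{1}$ rather than $\mathcal{S}$ in the infimum of (\ref{star}) — one needs the adjoined identity so that $o(sy)$ itself is among the admissible sets, which is exactly why Theorem \ref{main} is phrased with $\mathcal{S}^{1}$; the convention $\mathcal{S}^{1}=\mathcal{S}\cup\{1\}$ fixed in Section~3 makes this automatic, so no extra argument (such as finding a $t\in\mathcal{S}$ with $o(tsy)\subseteq o(sy)$) is required.
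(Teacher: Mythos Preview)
Your argument is correct and matches the paper's own reasoning: the paper simply states that it is ``direct to check'' that a strong-orbit $k$-Lipschitzian action satisfies condition (\ref{star}), and you have spelled out precisely that check by taking $t=1\in\mathcal{S}^{1}$ in the infimum. There is nothing to add.
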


The next example compares some of the Lipschitzian conditions studied in this work.

\begin{example} For $X=[0,1]$ define the mapping $T(x)=x^2$ if $x\in [0,1)$ and $T(1)=0$. We prove that the semigroup $\{T^n: n\in\mathbb{N}\}$ satisfies  condition (\ref{star}) for $k=1$ while it fails to be a Lipschitzian and orbit uniformly $k$-Lipschitzian action for any $k<2$.
\end{example}

\begin{proof}
Notice  that for all $x\in [0,1]$, $0\in\overline{o(x)}$. This implies that $D(x, o(x))=x$ and $D(x,o(y))=\max\{x, y-x\}$.

Let $y\in [0,1)$ with $D(x,o(y))\le D(x,o(x))$. This implies that $D(x,o(y))=\max\{x, y-x\}\le D(x,o(x))=x$ and $D(x,o(y))=x$.  
Fix $n\in\mathbb{N}$, 
$$
\begin{array}{lll}
\inf_{m\in\mathbb{N}} D(T^n x, o(T^m T^n y))&=&\inf_{m\ge n} D(T^n x, o( T^m y))  \\
& = &\inf_{m\ge n} \max\{x^{2n}, y^{2m}-x^{2n}\}\\
&\le & x^{2n}\le x=D(x, o(y)).\\
\end{array}
$$
In case that $y=1$, $T^m y=0$ for all $m\ge 2$ and the previous argument also follows.  Therefore condition (\ref{star}) is satisfied for $k=1$.

It is straightforward to show that $T$ is not uniform Lipschitzian for any $k$ since $T$ is not continuous. Additionally $\{T^n: n\in\mathbb{N}\}$ is not orbit $k$-Lipschitz for any $k\in (0,2)$ either. 

Indeed, consider $0<y<1$ and $x=y/2$. In this case $D(x,o(y))=y-x=d(x,y)=y/2$. 
 Additionally 
 $$
 {d(T^n(y/2),T^n y)\over d(y/2,y)}={y^{2n}-\left({y\over 2}\right)^{2n}\over {y\over 2}}=2 y^{2n-1}\left(1-{1\over 2^{2n}}\right).
  $$
 Considering for instance $y=e^{-1/n^2}$, it is not difficult to check that 
  $$
  \sup\left\{{d(T^n(y/2),T^n y)\over d(y/2,y)}: n\in\ \mathbb{N}, y\in [0,1)\right\}=2,
  $$
 from where it follows that the semigroup generated by $T$ is not an orbit uniformly $k$-Lipschitzian action for any $k<2$. \end{proof}


In a case of a single mapping, we deduce the following:

\begin{corollary}\label{mainT}
Let $X$ be a complete metric space and $T:X\to X$ a mapping such that there exists some  $k<\kappa(X)$ for which the following holds:
\begin{equation}\label{starT}
\begin{array}{c}
	 \mbox{ For  $(x,y)\in X\times X$ with $D(x,o(y))\le D(x,o(x))$:}\\
	\\
	\lim_m D(T^nx, o(T^m y))\le k D(x,o(y))
\end{array} 
\end{equation}
for all $n\in\mathbb{N}$.
Then either all the orbits are unbounded  or there exists some $x\in X$ such that $Tx=x$.  
\end{corollary}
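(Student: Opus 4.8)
\textbf{Proof proposal for Corollary \ref{mainT}.}

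The plan is simply to deduce Corollary \ref{mainT} from Theorem \ref{main} by specializing the semigroup to the one generated by a single mapping, so that the abstract condition (\ref{star}) becomes condition (\ref{starT}). Take $\mathcal{S}=\{T^n:n\in\mathbb{N}\}$ acting on $X$ in the usual way, $(T^n,x)\mapsto T^nx$, so that $\mathcal{S}^1=\{T^n:n\in\mathbb{N}\cup\{0\}\}$ with $T^0=\mathrm{id}$, and the orbit $o(y)$ as defined in Section 3 coincides with the orbit of $y$ under $T$. A common fixed point of the action $(\mathcal{S},X)$ is then exactly a fixed point of $T$, and ``all orbits unbounded'' has the same meaning in both statements; so the conclusion of Theorem \ref{main} delivers precisely the conclusion of Corollary \ref{mainT}, provided we verify the hypothesis.

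It therefore remains to check that (\ref{starT}) implies (\ref{star}) for $\mathcal{S}=\{T^n:n\in\mathbb{N}\}$. Fix $(x,y)\in X\times X$ with $D(x,o(y))\le D(x,o(x))$ and fix $s=T^n\in\mathcal{S}$. An element of $\mathcal{S}^1$ is $t=T^m$ with $m\ge 0$, and $tsy=T^{m+n}y$; as $m$ ranges over $\mathbb{N}\cup\{0\}$, the exponent $m+n$ ranges over all integers $\ge n$. Hence
\begin{equation*}
\inf_{t\in\mathcal{S}^1}D(sx,o(tsy))=\inf_{m\ge n}D(T^nx,o(T^m y))\le \lim_{m}D(T^nx,o(T^m y)),
\end{equation*}
where the last inequality holds because the infimum of a sequence is at most any of its limit points (and, since $o(x)$ is bounded for the relevant $x$ by the standing assumption of Theorem \ref{main}, these quantities are finite). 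By (\ref{starT}) the right-hand side is $\le kD(x,o(y))$, which is exactly (\ref{star}). Applying Theorem \ref{main} with this $k<\kappa(X)$ finishes the proof.

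I do not expect any serious obstacle here: the corollary is a routine specialization, and the only point that requires a moment's care is the bookkeeping with $\mathcal{S}^1$ versus $\mathcal{S}$ — namely that adjoining the identity to $\{T^n:n\in\mathbb{N}\}$ allows the index $m=0$, so that $\inf_{t\in\mathcal{S}^1}$ really does reduce to $\inf_{m\ge n}$ over the orbit tail — together with the elementary fact that $\inf_m a_m\le \lim_m a_m$ for a convergent sequence, which lets us pass from the limit appearing in (\ref{starT}) to the infimum appearing in (\ref{star}).
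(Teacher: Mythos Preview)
Your proof is correct and follows exactly the approach the paper intends: the corollary is stated without proof as the immediate specialization of Theorem \ref{main} to $\mathcal{S}=\{T^n:n\in\mathbb{N}\}$. One small quibble: Theorem \ref{main} carries no standing assumption that orbits are bounded, so your parenthetical is misplaced; but since $m\mapsto D(T^nx,o(T^my))$ is nonincreasing (the orbits $o(T^my)$ shrink as $m$ grows), the limit exists in $[0,+\infty]$ and the inequality $\inf_{m\ge n}\le \lim_m$ holds regardless, so the argument is unaffected.
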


Since $\kappa(X)\ge 1$  for any metric space $X$, the above corollary implies the existence of a fixed point in a complete metric space as long as condition (\ref{starT}) holds for  some $k<1$. The following  is an extension of Lifschitz Theorem (Theorem \ref{Li} above):

\begin{corollary}
	Let $X$ be a complete metric space and $T\colon X\to X$ such that there exists
	$k<\kappa (X)$ with
	$$
	d(T^n x,T^n y)\le k D(x,o(y))
	$$
	for all $n\in\mathbb{N}$ and for all $(x,y)\in X\times X$ with $D(x,o(y))\le D(x,o(x))$. If some orbit is bounded, then $T$ has a fixed point.  
\end{corollary}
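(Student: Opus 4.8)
The plan is to deduce this corollary directly from Corollary~\ref{mainT} by showing that its hypothesis here implies condition~(\ref{starT}). So I fix a pair $(x,y)\in X\times X$ with $D(x,o(y))\le D(x,o(x))$ and fix $n\in\mathbb{N}$, and I need to estimate $D(T^nx,o(T^my))$ for large $m$.

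First I would record that, for the semigroup $\mathcal{S}=\{T^l:l\ge 1\}$ acting on $X$, one has $o(y)=\{T^jy:j\ge 0\}$, and for every $l\ge 0$, $o(T^ly)=\{T^jy:j\ge l\}\subseteq o(y)$, so that $D(x,o(T^ly))\le D(x,o(y))\le D(x,o(x))$. Hence each pair $(x,T^ly)$ again satisfies the standing assumption of the hypothesis, and applying $d(T^n\cdot,T^n\cdot)\le kD(\cdot,o(\cdot))$ to $(x,T^ly)$ gives
\[
d(T^nx,T^{n+l}y)=d\bigl(T^nx,T^n(T^ly)\bigr)\le kD(x,o(T^ly))\le kD(x,o(y))
\]
for every $l\ge 0$. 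Consequently, for every $m\ge n$ we get $D(T^nx,o(T^my))=\sup_{j\ge m}d(T^nx,T^jy)\le kD(x,o(y))$.

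Next I would observe that $m\mapsto D(T^nx,o(T^my))$ is nonincreasing, since $o(T^{m+1}y)\subseteq o(T^my)$, and it is bounded below, so $\lim_m D(T^nx,o(T^my))$ exists and, by the previous step, is $\le kD(x,o(y))$. Since $n$ was arbitrary, this is exactly condition~(\ref{starT}). As $k<\kappa(X)$ and $X$ is complete, Corollary~\ref{mainT} applies: either all orbits are unbounded, or $T$ has a fixed point. The hypothesis that some orbit is bounded rules out the first alternative, so $T$ has a fixed point.

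Regarding obstacles: there is no real difficulty, which is natural for a corollary of this type — the entire content is the remark that restricting the orbit $o(T^my)$ to its single base point $T^my$ costs nothing once $m\ge n$, combined with the fact that the orbital radii $D(x,o(\cdot))$ only decrease along the orbit, so the standing condition $D\le D(x,o(x))$ is inherited by $(x,T^ly)$. The only point deserving a word of care is that $D(x,o(y))$ could a priori equal $+\infty$; but the boundedness of some orbit makes the relevant quantity $r(x)$ finite exactly as in the proof of Theorem~\ref{main}, so the mechanism of Corollary~\ref{mainT} runs unchanged and no separate treatment of that case is needed.
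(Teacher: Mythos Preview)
Your proof is correct and follows essentially the approach the paper intends: the paper does not spell out a proof for this corollary, but the obvious route is exactly the one you take, namely to verify condition~(\ref{starT}) by replaying the computation of Lemma~\ref{lemaencompass} on pairs $(x,T^ly)$ and noting that the constraint $D(x,o(\cdot))\le D(x,o(x))$ is preserved along the orbit since $o(T^ly)\subseteq o(y)$. Your remark about nonincreasing orbital radii making the limit in~(\ref{starT}) well defined is also in line with observation~ii) in the proof of Theorem~\ref{teor:Nwrt}.
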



\begin{remark} Note that Theorem \ref{main} is optimal since the following example shows that the Lipschitz constant cannot be improved.  Indeed,   consider the semigroup generated by the mapping defined on $[0,1]$ given by 
	$$
	T(x)=\begin{cases}
	1, & \text{if $x\in [0,1)$,}\\
	0, & \text{if $x=1$.}
	\end{cases}
	$$
To check the orbit Lipschitzian character of $T$, the worst case happens for $x=1/2$ and $y=1$. In this case, we have that $o(y)=\{1,0\}$, and so $D(x, o(y))=1/2$. From where, $d(Tx,Ty)=2D(x,o(y))$. As a consequence, the semigroup generated by $\{T\}$ enjoys the orbit $2$-Lipschitzian condition (and so condition (\ref{star}) in Theorem \ref{main} too), while it fails to have a fixed point. Remember that $\kappa ([0,1])=2$. 
\end{remark}


\begin{example}
Let $X=B_{\ell_p}$ be the closed unit ball of $\ell_p$ endowed with the usual metric provided by the norm $\V\cdot\V_p$ for $1\le p<+\infty$. Let us denote by $\{e_n\}$ the standard basic vectors and define $T:B_{\ell_p}\to B_{\ell_p}$ as $T(x)=e_1$ if $x\ne e_n$ for all $n\in\mathbb{N}$, and $T(e_n)=e_{n+1}$. It is clear that $T$ lacks of any fixed point and so does the semigroup generated by $\{T^n: n\in\mathbb{N}\}$. We next prove that the action satisfies the strong-orbit $2^{1/p}$-Lipschitzian condition.

Indeed, for every $x\in B_{\ell_p}$ we have that $\limsup_n \V x-e_n\V=(\V x\V_p+1)^{1/p}$. Moreover, if $x\ne x_n$ for all $n\in\mathbb{N}$, $o(x)=\{x,e_1,e_2,\cdots \}$ and $o(Tx)=\{e_1,e_2,\cdots \}$. In particular, setting $x:=0$, $D(x, o(x))=1$ and $D(x, o(y))=1$ for all $y\in B_{\ell_p}$. Additionally, for every $n\in\mathbb{N}$, $y\ne 0$
$$
D(T0, o(T^n y))=D(e_1, o(T^n y))=2^{1\over p}=2^{1\over p} D(0, o(y))
$$
which shows that the semigroup satisfies the  strong-orbit $2^{1/p}$-Lipschitz condition and its extension considered in Theorem \ref{main} given by condition (\ref{star}). The above shows in particular that $\kappa(\ell_p)\ge 2^{1/p}$ for $1\le p<+\infty$ and from the example included in Remark \ref{ExP} (see Section 5), $\kappa(\ell_\infty)=1$.

\end{example}

\section{Strong-orbit Lipschitzian actions and   uniform normal structure}

%
%

The main goal of this section is to obtain a common fixed point result for strong-orbit Lipschitzian actions in metric spaces satisfying the (metric) uniform normal structure (see Section $2$ for definitions) and, therefore, to  achieve  extensions of Casini-Maluta Theorem \cite{CM}, Lim-Xu Theorem \cite{LX} and some others published in the literature.

\medskip

 Given a semigroup $\mathcal{S}$, a preorder can be defined as follows: 
$$
 s\le t \text{ if } t\in \mathcal{S}^1s=\{s\}\cup \mathcal{S} s.
$$
Notice that $\le$ is a reflexive and transitive  relation, but it need not be antisymmetric.  In fact, when $\mathcal{S}$ is a group, $t\in \mathcal{S}s$ for every $t,s\in\mathcal{S}$.

Along this section we assume that the preorder is total, that is, for every $s,t\in \mathcal{S}$,  either $s\ge t$ or $t\ge s$. In particular, this implies that $(\mathcal{S},\le)$ is a directed set.
Particular examples are groups, $(\mathbb{N},+)$, $((1,+\infty), +)$, $((1,+\infty), \cdot)$.

\medskip

Property (P) has already been recalled in Section $2$ for sequences (see Definition \ref{Psequences}), we will need it for nets in this section.

\begin{definition}\label{Pnets}
	A metric space $X$ is said to have property (P) for nets if given  two bounded nets $\{ x_s\}_s$ and $\{ z_s\}_s$ in $X$, such that $z_s\in  {\rm cov}(\{x_j: j\ge s\})$, there is $z\in \bigcap_s {\rm cov}(\{ z_j\; :\; j\geq s\})$ such that
	$$
	\limsup_s d(z,x_s)\le \limsup_t\limsup_s d(z_t,x_s).
	$$
	
\end{definition}

Recall that a metric space is said to be {\it proper} if its closed balls are compact. Then the following lemma is easy to prove.

\begin{lemma} 
	Every proper metric space has property (P) for nets. \end{lemma}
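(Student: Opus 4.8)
The plan is to show that in a proper metric space, the defining conditions of property (P) for nets can be verified using compactness of closed balls, exactly mimicking how one proves that proper metric spaces have property (P) for sequences. First I would fix two bounded nets $\{x_s\}_s$ and $\{z_s\}_s$ with $z_s\in{\rm cov}(\{x_j:j\ge s\})$ for every $s$. Boundedness means all $z_s$ lie in some closed ball $B$, which is compact by properness; hence the net $\{z_s\}_s$ has a subnet $\{z_{s_\beta}\}_\beta$ converging to some point $z\in B$. The candidate for the required point is this $z$.

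The key steps are then twofold. First, I would check that $z\in\bigcap_s{\rm cov}(\{z_j:j\ge s\})$. Fix $s$; for $\beta$ large enough the index $s_\beta\ge s$ (because the subnet is cofinal and $(\mathcal S,\le)$, or rather the index set of the net, is directed), so $z_{s_\beta}\in\{z_j:j\ge s\}\subset{\rm cov}(\{z_j:j\ge s\})$ eventually along the subnet. Since ${\rm cov}$ of a set is an intersection of closed balls, it is closed, and the limit $z$ of the subnet therefore lies in ${\rm cov}(\{z_j:j\ge s\})$. As $s$ was arbitrary, $z$ belongs to the intersection over all $s$. Second, I would verify the inequality
$$
\limsup_s d(z,x_s)\le \limsup_t\limsup_s d(z_t,x_s).
$$
Call the right-hand side $M$. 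For each $t$, since $z_t\in{\rm cov}(\{x_j:j\ge t\})\subset B(x_s, d(z_t,x_s)+\eta)$ fails to be quite the right estimate directly, the cleaner route is: for any $\eta>0$ and any $t$, eventually in $s$ we have $d(z_t,x_s)\le \limsup_s d(z_t,x_s)+\eta$, and $z_t$ lying in the admissible cover of the tail $\{x_j:j\ge t\}$ forces $z_t\in B(x_j,\sup_{i\ge t} d(z_t,x_i))$ for all $j\ge t$; combining, one controls $d(z_t,x_j)$ for $j$ large by $\limsup_s d(z_t,x_s)$. Passing $z_t\to z$ along the subnet and using continuity of the metric, I obtain $d(z,x_j)\le \limsup_s d(z_s',x_s)+\eta$-type bounds whose limsup in $j$ is at most $M$. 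Letting $\eta\to0$ finishes the estimate.

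The main obstacle, and the place requiring care, is the interplay between the original net and the chosen subnet: the inequality must be proved for the original net $\{x_s\}_s$ indexed by $s$, while $z$ is only the limit of a subnet of $\{z_s\}_s$. One must be careful that $\limsup_s d(z,x_s)$ — a limsup over the full directed index set — is genuinely controlled, not merely $\limsup$ along some subnet. The resolution is that $z$ is a fixed point (the subnet is used only to extract $z$), after which the estimate $d(z,x_s)\le \dots$ is derived for all sufficiently large $s$ in the full index set, using that $z$ lies in the admissible covers of the tails $\{x_j:j\ge t\}$, which themselves are tails in the full net. Since the argument is a direct transcription of the sequential case with ``subsequence'' replaced by ``subnet'' and compactness of balls used in place of sequential compactness, I expect the proof to be short, and the only genuine subtlety is the bookkeeping with cofinal subnets of a directed set.
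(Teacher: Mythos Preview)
Your proposal is correct and follows essentially the same approach as the paper: extract a convergent subnet of $\{z_s\}$ by compactness of closed balls, check that the limit $z$ lies in each ${\rm cov}(\{z_j:j\ge s\})$ by closedness, and then establish the required inequality. The only difference is that you overcomplicate the inequality step---your detour through the hypothesis $z_t\in{\rm cov}(\{x_j:j\ge t\})$ is unnecessary there, and the paper dispatches it in one line via the triangle inequality $d(z,x_s)\le d(z,z_{t_\alpha})+d(z_{t_\alpha},x_s)$, taking $\limsup_s$ and then $\liminf_\alpha$ over the subnet.
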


\begin{proof} Let $\{x_s\}$, $\{z_s\}$ two bounded nets as in Definition \ref{Pnets}. We can assume that they lie in a compact closed ball. Let  $z$ be a cluster point of the net $\{z_s\}$ for the metric topology. Then $z\in \bigcap_s {\rm cov}(\{ z_j\; :\; j\geq s\})$ and there exists a subnet $\{z_{s_\alpha}\}$ such that $\lim_\alpha d(z, z_{s_\alpha})=0$. Thus,
$$
\begin{array}{lll}
\limsup_s d(z,x_s)& \le & \lim_\alpha d(z, z_{t_\alpha})+\liminf_\alpha \limsup_s  d(z_{t_\alpha}, x_s)\\
                              &\le & \limsup_t \limsup_s  d(z_{t}, x_s).\\
                    \end{array}
                    $$\end{proof}

Although we are strongly inspired by the arguments used in \cite{CM} and \cite{LX}, we will proceed introducing all the details for the sake of a better understanding.

\begin{remark}\label{min} Note that  if $A$ is a subset of $X$ and $z\in {\rm cov}(A)$ then $d(y,z)\le D(y,A)$ for all $y\in X$. 
This, in particular, implies that if $(A_s)_s$ is a net of  subsets  of $X$ such that  $z\in\bigcap_s {\rm cov}(A_s)$, then, for all $y\in X$, it must be the case that
$$
d(z,y)\le \inf_s D(y,A_s).
$$
Additionally, $\delta(A)=\delta({\rm cov}(A))$ for all $A\subset X$.
\end{remark}

\begin{theorem}\label{teor:Nwrt}
	Let $X$ be a complete  metric space with  (metric) uniform normal structure and property (P) for nets. Let $(\mathcal{S},\le)$ be a totally preordered semigroup such that $(\mathcal{S},X)$ is a strong-orbit $k$-Lipschitzian action. If $k< \tilde{N}(X)^{-1/2}$ and some orbit is bounded,  there is $z\in X$ such that $sz=z$ for all $s\in \mathcal S$.

\end{theorem}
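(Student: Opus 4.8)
The plan is to mimic the Casini–Maluta / Lim–Xu iteration scheme, but replacing the role of ordinary distances and diameters by the orbital quantities $D(x,o(y))$, and exploiting the totally preordered structure of $\mathcal S$ to produce nets indexed by $\mathcal S$ rather than sequences. Fix a point $w\in X$ with $o(w)$ bounded, and work inside $C:=\mathrm{cov}(o(w))$, which is an admissible bounded set; note that $o(sw)\subset C$ for all $s\in\mathcal S^1$, so the whole action can be studied on $C$. For $x\in X$ put $r(x):=\inf\{D(x,o(y)):y\in X\}$ (finite by boundedness of some orbit, after passing to $C$). As in Theorem~\ref{main}, the first step is to check that $r(x)=0$ forces $x$ to be a common fixed point: given $s\in\mathcal S$ and $\e>0$, pick $y$ with $D(x,o(y))<\e/2$; since $D(sx,o(sy))\le kD(x,o(y))$ and $x,sx$ are both within $D(x,o(y))$-type distance of $o(y)$, resp.\ $o(sy)\subset o(y)$-neighbourhoods, one gets $d(x,sx)\le (1+k)D(x,o(y))$ small. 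So it suffices to produce $p$ with $r(p)=0$.

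The heart of the argument is the iteration. Suppose $r(x)=:r>0$ for some $x$. Using the strong-orbit condition and the total preorder on $\mathcal S$, build a net $(x_s)_{s\in\mathcal S}$ along the orbit of $x$, say $x_s:=sx$, together with the ``asymptotic radius'' functional $\phi(u):=\limsup_s D(u,o(x_s))$ or, more precisely in the admissible-set language, consider $r_s:=\mathrm{cov}(o(x_j):j\ge s)$, a decreasing net of admissible sets, and its asymptotic center data. Property (P) for nets yields a point $z\in\bigcap_s \mathrm{cov}(\{x_j:j\ge s\})$ with $\limsup_s d(z,x_s)\le \limsup_t\limsup_s d(x_t\text{-orbit radius})$; the uniform normal structure of $X$ then forces the ``orbital Chebyshev radius'' of the net to be at most $\tilde N(X)\cdot(\text{orbital diameter of the net})$. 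Because the action is strong-orbit $k$-Lipschitzian, applying $s$ contracts $D(\,\cdot\,,o(\,\cdot\,))$ by $k$, so iterating $s$ along the net and combining with the normal-structure estimate produces, after one ``round,'' a new point $z$ with $r(z)\le k^2\tilde N(X)\,r(x)$ (this is where $k<\tilde N(X)^{-1/2}$, i.e.\ $k^2\tilde N(X)<1$, enters) together with a control $d(x,z)\le A\, r(x)$ for some fixed constant $A$ depending only on $k$ and $\tilde N(X)$. Iterating this construction gives a sequence $x_0=x$, $x_{n+1}=z(x_n)$ with $r(x_{n+1})\le (k^2\tilde N(X))\,r(x_n)$ and $d(x_{n+1},x_n)\le A\,r(x_n)$, hence geometrically decaying; $(x_n)$ is Cauchy, and by completeness converges to $p$ with $r(p)=0$, which is the desired common fixed point.

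The main obstacle is the middle step: correctly setting up the orbital analogue of the asymptotic center and showing the normal-structure contraction $r(z)\le \tilde N(X)\cdot(\text{something})\cdot r(x)$ in a way that is compatible with the strong-orbit Lipschitz condition. Concretely one must (i) check that $\delta\big(\mathrm{cov}(o(x_s):s\ge t)\big)$ can be estimated by $k\cdot r(x)$-type quantities using \eqref{ostar}, and (ii) use property (P) to extract the limit point $z$ inside all the admissible covers while keeping $\limsup_s d(z,x_s)$ bounded by $\tilde N(X)$ times the orbital diameter — this is exactly the place where the Lim–Xu hypothesis on nets is indispensable, and where the ``only a preorder, directed set'' structure of $\mathcal S$ must be handled carefully so that $\limsup_s$ along $\mathcal S$ is meaningful. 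Everything else (the $r(p)=0\Rightarrow$ fixed point step, the Cauchy/completeness conclusion, passing to $C=\mathrm{cov}(o(w))$) is routine and parallels Theorem~\ref{main} and Remark~\ref{min}.
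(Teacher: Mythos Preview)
Your overall architecture (iterate via uniform normal structure + property (P), produce a Cauchy sequence, take the limit) matches the paper's proof, but the specific quantity you propose to track is not the one that makes the recursion close, and this is a genuine gap rather than a cosmetic difference.

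You import from Theorem~\ref{main} the functional $r(x)=\inf_{y}D(x,o(y))$ and aim for a contraction $r(z)\le k^{2}\tilde N(X)\,r(x)$ after one round. But the diameter estimate you need in step~(i) is $\delta(o(x))\le k\,D(x,o(x))$ (this is where the total preorder and the strong-orbit condition \eqref{ostar} are used), and $D(x,o(x))$ is \emph{not} controlled by $r(x)$: there is no reverse inequality between the infimum over all orbits and the particular value $D(x,o(x))$. Consequently the claimed one-step contraction for $r(\cdot)$ does not follow. What the paper actually tracks is
\[
D_j:=\lim_{s} D\bigl(x_{j+1},\,o(sx_j)\bigr),
\]
and the recursion $D_j\le c\,k^{2}\,D_{j-1}$ (for a fixed $c\in(\tilde N(X),k^{-2})$) is obtained by chaining three estimates: $D_j\le c\,\delta(o(x_j))$ from uniform normal structure together with property~(P) applied to the pair of nets $\{sx_j\}_s$ and $\{z_s\}_s$ with $z_s$ a near-Chebyshev center of ${\rm cov}(o(sx_j))$; then $\delta(o(x_j))\le k\,D(x_j,o(x_j))$ from \eqref{ostar} and the total preorder; and finally $D(x_j,o(x_j))\le k\,D_{j-1}$, which uses that $x_j\in\bigcap_s{\rm cov}(o(sx_{j-1}))$ via Remark~\ref{min}. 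The last step is exactly what ties the new point $x_{j+1}$ back to the orbital data of $x_j$ and cannot be expressed through $r(\cdot)$ alone.

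A second, smaller divergence: the paper does not conclude by proving $r(p)=0$ and invoking the ``$r=0\Rightarrow$ fixed point'' lemma. Instead, once $(x_j)$ is Cauchy with limit $w$, it shows $D(w,o(x_j))\to 0$ and then directly estimates
\[
d(w,sw)\le d(w,x_j)+D(x_j,o(x_j))+D(sw,o(sx_j))\le d(w,x_j)+D(x_j,o(x_j))+k\,D(w,o(x_j))\longrightarrow 0.
\]
Your ``$r(p)=0$'' endgame would also work, but only after you have the correct recursion above; the preliminary fixed-point criterion you sketch is fine but ultimately unnecessary here.
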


\begin{proof}

We will start the proof by making some preliminary observations.  Let $x\in X$:

\begin{itemize}

\item[i)] If $s,t\in\mathcal{S}$,  $s\le t$ then   $o(tx)\subset o(sx)$  and $o(sx)=\{ ux: u\ge s\}$.

\item[ii)]  $\{ sx: s\in\mathcal{S}\}$ is a net in $X$ and $\{o(sx): s\in \mathcal{S}\}$ forms a nonincreasing net of subsets of $X$. Additionally for all $y\in X$:
$$
\limsup_s d(y , sx)=\lim_s D(y, o(sx))=\inf_s D(y, o(sx)).
$$

\end{itemize}

We choose a constant $c$ such that $1>c>\tilde{N}(X)$ and $k< c^{-1/2}$. Fix $x_0\in X$ for which the orbit is bounded and consider the net $\{sx_0: s\in\mathcal{S}\}$. From the definition of the uniform normal structure coefficient  $\tilde{N}(X)$, for $s\in \mathcal S$, there exists 
$$
z_s\in {\rm cov}(o(sx_0))\subset B(z_s, c \delta(o(sx_0))).
	$$
	Applying the condition (P) to the nets $\{sx_0: s\in\mathcal{S}\}$ and $\{z_s:s\in\mathcal{S}\}$, there exists $z_0\in \bigcap_s {\rm cov}(o(sx_0))$ such that:
	$$
	\limsup_s d(z_0, sx_0)\le \limsup_t\limsup_s d(z_t, sx_0).
	$$
Writing the above inequality in terms of the orbits: 
	$$
	\begin{array}{lll}
	\lim_s D(z_0, o(sx_0))&\le & \limsup_t \lim_s D(z_t, o(sx_0))=\limsup_t \inf_s D(z_t, o(sx_0))\\
	& \le& \limsup_tD(z_t, o(tx_0))\le c \limsup_t \delta(o(tx_0))\\
	&\le & c \delta (o(x_0)).\\
	\end{array}
	$$
	Thus, if we call $x_1:= z_0$ we have just prove that
	$$
	x_1\in \bigcap_s {\rm cov}(o(sx_0))
	$$
	 and, by Remark \ref{min},
	$$
	\lim_s D(x_1, o(sx_0)) \le c  \delta(o(x_0)).
	$$
	Repeating the process, we can find a sequence $(x_j)\subset X$ such that:
	\begin{equation}\label{int}
	x_{j+1}\in \bigcap_s {\rm cov}(o(sx_j))
	\end{equation}
	 and, again by Remark \ref{min},
	\begin{equation}\label{ine}
	\lim_s D(x_{j+1}, o(sx_j))\le  c\delta(o(x_j)).
	\end{equation}
We next find an upper estimation for $\delta(o(x_j))$: 

Let $s,t\in \mathcal{S}$ with $t\ne s$. Since the preorder is total,  we can assume that $s\geq t$ which implies that $s=at$ for some $a\in\mathcal{S}$. Then
$$
d(sx_j, tx_j)  =  d(atx_j, tx_j)\le D(tx_j, o(tx_j))\le kD(x_j, o(x_j))
$$
The above implies that 
\begin{equation}\label{d}
\delta(o(x_j))\le 	kD(x_j, o(x_j)).
\end{equation}
Additionally, using (\ref{int}) and Remark \ref{min},  for every $s\in \mathcal{S}$:
$$
\begin{array}{lll}
d(x_j, sx_j)& \le &  \inf_{t\in\mathcal{S}} D(sx_j, o(t x_{j-1}))\le \inf_{t\in\mathcal{S}} D(sx_j, o(st x_{j-1}))\\
             & \le & \inf_{t\in\mathcal{S}} k D(x_j, o(t x_{j-1})).\\
             \end{array}
$$
Taking supremum in $s\in\mathcal{S}$, we deduce that 	
\begin{equation}\label{j}
D(x_j, o(x_j))\le k	\inf_{t\in\mathcal{S}}  D(x_j, o(t x_{j-1}))= k \lim_t D(x_j, o(t x_{j-1})).
\end{equation}

For every $j\ge 0$, we define $D_j:= \lim_s D(x_{j+1}, o(sx_j))$. Considering (\ref{ine}), (\ref{d}) and (\ref{j}), we obtain that: 
\begin{equation}\label{r}
D_j\le ck^2 D_{j-1},
\end{equation}
for every $j\in\mathbb{N}$.
Set $r:=ck^2<1$. In particular $\lim_j D_j=0$ and $\lim_j D(x_j, o(x_j))=0$ by (\ref{j}).

\medskip

We next check that $(x_j)$ is a Cauchy sequence by using  (\ref{int}), Remark \ref{min},  (\ref{j}) and (\ref{r}):
$$
\begin{array}{lll}
d(x_{j+1}, x_j)&\le &\inf_{s\in\mathcal{S}} D(x_j, o(sx_{j}))\le D(x_j, o(x_j))
\\                    &\le & k D_{j-1}\le k rD_{j-2}\le k r^{j-1} D_0.
\end{array}
$$
Set $w=\lim_j x_j$,  which exists by the completeness of the metric space. We will prove that $w$ is a common fixed point for $\mathcal{S}$. Firstly, let us check that $\lim_j D(w, o(x_j))=0$.
Indeed, let $t\in\mathcal{S}$:
$$
d(w, tx_j)\le d(w, x_j)+ d(x_j, tx_j)\le d(w, x_j)+D(x_j, o(x_j)).
$$
The above implies that 
$$
D(w, o(x_j))\le d(w, x_j)+ D(x_j, o(x_j))\to_j 0. 
$$
Finally, given $s\in\mathcal{S}$:
$$
\begin{array}{lll}
d(w, sw)& \le & d(w, x_j)+ d(x_j, sx_j)+ d(sx_j, sw)\\
            & \le & d(w, x_j)+ D(x_j, o(x_j)) + D(s w, o(sx_j))\\
            & \le & d(w, x_j)+ D(x_j, o(x_j))+ k D(w, o(x_j))\to_j 0. \\
            \end{array}
            $$
Therefore the proof is concluded. 

	\end{proof}

Notice that when $\mathcal{S}$ is countable,  it is enough to consider property (P) for sequences.
When $\mathcal{S}$ is additionally a commutative semigroup, we have the following corollary: 

\begin{corollary} Let $X$ be a metric space with uniform normal structure and property (P) for nets. 
Let $\mathcal{S}$ be a commutative semigroup, or more generally, such that $\mathcal{S}s\subset s\mathcal{S} $ for all $s\in\mathcal{S}$. Assume that $(\mathcal{S},\le)$ is total. Then every $(\mathcal{S},X)$ action which is orbit $k$-Lipschitzian with $k<\tilde{N}(X)^{-1/2}$ has a common fixed point. 
\end{corollary}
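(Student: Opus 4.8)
The plan is to deduce this corollary directly from Theorem \ref{teor:Nwrt} by verifying its single non-obvious hypothesis, namely that the action is in fact strong-orbit $k$-Lipschitzian rather than merely orbit $k$-Lipschitzian. This is precisely the content of Lemma \ref{lemaencompass2}: under the structural assumption $\mathcal{S}s\subset s\mathcal{S}$ for all $s\in\mathcal{S}$ (which holds in particular for commutative semigroups and for groups, as remarked there), every orbit $k$-Lipschitzian action is automatically strong-orbit $k$-Lipschitzian. So the first step is simply to invoke Lemma \ref{lemaencompass2} to upgrade the hypothesis.

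Once that upgrade is in place, the remaining hypotheses of Theorem \ref{teor:Nwrt} are inherited verbatim from the corollary's assumptions: $X$ is complete (this should be stated explicitly, or noted as part of "metric space with uniform normal structure and property (P)"), has metric uniform normal structure, has property (P) for nets, the preorder $(\mathcal{S},\le)$ is total by hypothesis, and $k<\tilde N(X)^{-1/2}$. The only genuinely missing ingredient compared to the theorem is the clause "some orbit is bounded." Here I would appeal to the closing Remark of Section 3: if every orbit is unbounded there is trivially no common fixed point and nothing to prove, so we may assume some orbit $o(x_0)$ is bounded; alternatively, if $X$ itself is bounded (which one may wish to assume, as metric uniform normal structure is usually posed on bounded spaces), every orbit is bounded automatically. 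I would state this dichotomy in one sentence and then apply Theorem \ref{teor:Nwrt} to conclude the existence of $z\in X$ with $sz=z$ for all $s\in\mathcal{S}$.

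Thus the proof is essentially a two-line reduction: (1) by Lemma \ref{lemaencompass2}, the orbit $k$-Lipschitzian action $(\mathcal{S},X)$ is strong-orbit $k$-Lipschitzian; (2) apply Theorem \ref{teor:Nwrt}. There is no real obstacle; the only point requiring a moment of care is making sure the hypothesis "some orbit is bounded" is handled cleanly — either by restricting to bounded $X$, or by the trivial observation that the unbounded-orbit case carries no fixed point and is therefore vacuous for the stated conclusion — and making sure completeness is not silently dropped from the corollary's hypotheses. I would phrase the corollary's statement to include completeness (matching Theorem \ref{teor:Nwrt}) and perhaps add "and some orbit is bounded" for safety, or else fold the boundedness discussion into the one-line proof.
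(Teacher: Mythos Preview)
Your approach is correct and is exactly what the paper intends: the corollary is stated immediately after Theorem~\ref{teor:Nwrt} with no proof, so the implicit argument is precisely the two-line reduction you describe --- Lemma~\ref{lemaencompass2} upgrades orbit $k$-Lipschitzian to strong-orbit $k$-Lipschitzian, and then Theorem~\ref{teor:Nwrt} applies. Your observations that the corollary's statement is slightly loose about completeness and about the bounded-orbit hypothesis are accurate (the paper simply inherits these tacitly from the theorem), and the way you propose to handle them is appropriate.
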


For a single mapping $T$, we have the next result.

\begin{corollary} Let $X$ be a metric space with uniform normal structure and property (P) for sequences. 
Let $T:X\to X$ be orbit uniform $k$-Lipschitzian with  $k<\tilde{N}(X)^{-1/2}$. Then $T$ has a fixed point.  
\end{corollary}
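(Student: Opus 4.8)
The plan is to obtain this corollary as the single-map specialization of Theorem~\ref{teor:Nwrt}. Given $T\colon X\to X$ orbit uniformly $k$-Lipschitzian, I would consider the semigroup $\mathcal{S}=\{T^{n}\colon n\in\mathbb{N}\}$ acting on $X$ by $(T^{n},x)\mapsto T^{n}x$, and then check that this action satisfies every hypothesis of Theorem~\ref{teor:Nwrt}, so that the common fixed point produced there is in particular a fixed point of $T$.

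Three points must be verified. First, the induced preorder on $\mathcal{S}$: by definition $T^{m}\le T^{n}$ exactly when $T^{n}\in\{T^{m}\}\cup\mathcal{S}T^{m}=\{T^{m},T^{m+1},T^{m+2},\dots\}$, i.e. when $m\le n$; hence $(\mathcal{S},\le)$ is order-isomorphic to $(\mathbb{N},\le)$ and in particular total, so $\mathcal{S}$ is a totally preordered semigroup as required. Second, the Lipschitz hypothesis: Theorem~\ref{teor:Nwrt} asks for the \emph{strong}-orbit $k$-Lipschitzian condition on $(\mathcal{S},X)$, which is formally stronger than the mere assumption that $T$ be orbit uniformly $k$-Lipschitzian; this gap is bridged precisely by Lemma~\ref{lemaencompass}, which states that an orbit uniformly $k$-Lipschitzian $T$ generates a semigroup $\{T^{n}\colon n\in\mathbb{N}\}$ satisfying the strong-orbit $k$-Lipschitzian condition. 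Third, property (P): since $\mathcal{S}$ is countable, the nets $\{sx_{0}\}_{s}$ and $\{z_{s}\}_{s}$ occurring in the proof of Theorem~\ref{teor:Nwrt} can be reindexed as sequences, so property (P) for sequences — our hypothesis — is enough, exactly the reduction noted immediately after that theorem. (Theorem~\ref{teor:Nwrt} also presupposes that some orbit be bounded, which is the ambient assumption throughout this normal-structure discussion.)

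With these checks in place, Theorem~\ref{teor:Nwrt} yields $z\in X$ with $sz=z$ for every $s\in\mathcal{S}$; taking $s=T$ gives $Tz=z$, the desired fixed point. I expect no genuinely hard step here: the proof is entirely a matter of matching hypotheses, and the only subtlety worth flagging is the invocation of Lemma~\ref{lemaencompass} to pass from the weak single-map condition in the statement to the strong-orbit condition that Theorem~\ref{teor:Nwrt} actually consumes.
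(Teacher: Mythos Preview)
Your proposal is correct and matches the paper's approach exactly: the paper's proof is the single sentence ``It is enough to apply Theorem~\ref{teor:Nwrt} to the action generated by $T$ and its iterates,'' and you have unpacked precisely the verifications (totality of the preorder via $(\mathbb{N},\le)$, the upgrade to the strong-orbit condition via Lemma~\ref{lemaencompass}, and the sufficiency of property~(P) for sequences by countability of $\mathcal{S}$) that this sentence leaves implicit.
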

\begin{proof}
	It is enough to apply Theorem \ref{teor:Nwrt} to the action generated by $T$ and its iterates.
\end{proof}

\begin{remark}
Given $\mathcal{S}$ a semigroup, if we fix some $t\in\mathcal{S}$ and consider 
$$
\mathcal{S}_t:=S^{1}t= \{t\}\cup \mathcal{S}t=\{s\in\mathcal{S}: s\ge t\},
$$
It turns out that $\mathcal{S}_t$ is also a semigroup. Assume that $\mathcal{S}$ is an action over a metric space $X$ and all the mappings in $\mathcal{S}_t$ share a common fixed point $w\in X$. Then $w$ is a common fixed point for all $s\in\mathcal{S}$. Indeed $st(w)=w=t(w)$ so $s(w)=w$ for all $s\in\mathcal{S}$. This implies in particular that in order to obtain a fixed point for a semigroup action, it is enough to verify the required hypotheses  for some $\mathcal{S}_t$ with $t\in\mathcal{S}$. In particular, if $\mathcal{S}$ is generated by a single mapping $T$, the existence of a common fixed point for  $\mathcal{S}_{n_0}=\{T^n: n\ge n_0\}$  for some  $n_0\in\mathbb{N}$, implies the existence of a fixed point for $T$. 

\end{remark}

\begin{remark}\label{ExP}

We would like to  point out the fact that only one bounded orbit is needed for our results in contrast to the fact that bounded metric spaces are usually required in the literature
(see for instance in \cite{CM, DuTa, LX}).  This is somehow surprising if we consider the following example, given by S. Prus (see for instance \cite[p. 412]{Handb}: Let $T:\ell_\infty\to \ell_\infty$ be defined by 
$$
T(x)=(1+\limsup_n x_n, x_1, x_2, x_3,\cdots,....).
$$
It is very easy to check that $T$ is an isometry fixed point free mapping, where all the orbits are bounded. Additionally, $\ell_\infty$ is a hyperconvex metric space, which implies that $\displaystyle \tilde{N}(\ell_\infty)=\frac{1}{2}$ (see \cite{EK} for details). Therefore,   if $\ell_\infty$ verified property $(P)$ for sequences, $T$ would have to have a fixed point. In conclusion, $\ell_\infty$ does not verify property $(P)$ for sequences. 

\medskip

In case that $\mathbb{R}^n$ is endowed with the supremum norm and $d(x,y):=\V x-y\V_\infty$ for $x,y\in \mathbb{R}^n$,   we know from hyperconvexity that $\tilde{N}(\mathbb{R}^n)=1/2$ and that property (P) holds since $\mathbb{R}^n$ is  a  proper metric space (note that in this case $\kappa(\mathbb{R}^n)=1$ when  $n>1$). Thus, Theorem \ref{teor:Nwrt} can be applied and we can deduce the existence of a common fixed point for a strong-orbit $k$-Lipschitzian action  or a fixed point for an orbit uniformly $k$-Lipschitzian mapping, when $k<\sqrt{2}$. Besides,  Theorem  \ref{teor:Nwrt} can also be applied  to the Isbell's hyperconvex hull of every compact metric space (see   for instance \cite[Section 8]{EK}), since Isbell's hyperconvex hull of a compact metric space is hyperconvex and  compact and therefore property (P) holds.\end{remark}

\medskip

\end{document}